\newcommand{\rd}{\mathrm{d}}
\newcommand{\U}{\mathsf{U}}
\newcommand{\V}{\mathsf{V}}
\newtheorem{thm}{Theorem}
\newtheorem{cor}{Corollary}
\newtheorem{rem}{Remark}
\newtheorem{lma}{Lemma}
\title{An Analysis of Constraint-Relaxation in PDE-Based Inverse Problems}
\author{Tristan van Leeuwen$^{1,2}$}
\address{$^1$Centrum Wiskunde \& Informatica, Amsterdam, The Netherlands}
\address{$^2$Utrecht University, Utrecht, The Netherlands}
\author{Yunan Yang$^3$}
\address{$^3$Cornell University, Ithaca, NY, United States of America}
\email{t.van.leeuwen@cwi.nl}
\email{yunan.yang@cornell.edu}
\date{\today}
\begin{document}

\maketitle

\begin{abstract}
Inverse problems are ubiquitous in science and engineering. Many of these are naturally formulated as a PDE-constrained optimization problem. These non-linear, large-scale, constrained optimization problems know many challenges, of which the inherent non-linearity of the problem is an important one. In this paper we focus on a relaxed formulation of the PDE-constrained optimization problem and provide an in-depth analysis of it. Starting from an infinite-dimensional formulation of the inverse problem with discrete data, we propose a general framework for the analysis and discretisation of such problems. The relaxed formulation of the PDE-constrained optimization problem is shown to reduce to a weighted non-linear least-squares problem. The weight matrix turns out to be the Gram matrix of solutions of the PDE, and in some cases be estimated directly from the measurements.
The latter observation points to a potential way to unify recently proposed data-driven reduced-order models for inverse problems with PDE-constrained optimization. We provide a number of representative case studies and numerical examples to illustrate our findings.
\end{abstract}

\section{Introduction}

Inverse problems are ubiquitous in science and engineering and can often be expressed as a PDE-constrained optimization problem, where one aims to identify the parameters of a PDE give partial measurements of its solution. We built on the work presented in \cite{van2015penalty} where the PDE-constrained problem is posed as a joint parameter-state estimation problem which enforces the PDE-constraints only approximately. Effectively, this lifts the search space from the parameter alone to a much larger joint parameter-state space. It has been observed that this can mitigate the non-linearity of the optimization problem to some extent~\cite{van2015penalty, fang2020lift}. This relaxation approach is just one of many that aim to address the non-linearity and constitutes a line of research that is particularly prominent in exploration seismology and other wave-based inverse problems (see e.g., \cite{Symes2014,Warner2016,symes2020a,Symes2020} and references therein). The focus of this work, however, is the analysis of the relaxation described in \cite{van2015penalty}.

Concretely, the inverse problem is formulated as estimating coefficients $c$ from given data by solving
\begin{equation}\label{eq:intro}
\min_{c,u_1, \ldots, u_n} {\textstyle{\frac{1}{2}}}\sum_{i,j=1}^n |\mathcal{P}_iu_j - d_{ij}|^2 +  {\textstyle{\frac{\rho}{2}}}\|\mathcal{L}(c)u_j - \mathcal{P}_j\|_\mathsf{V}^2,
\end{equation}
where $\mathcal{L}(c):\mathsf{U}\rightarrow\mathsf{V} = \mathsf{U}^*$ denotes the partial differential operator with coefficient $c$, $\mathcal{P}_i\in \V$ denotes the linear sampling operator which also acts as the source terms, and $d_{ij} = \mathcal{P}_i\check{u}_j$ represents the observed data corresponding to the underlying true state $\check{u}_j = \mathcal{L}(\check{c})^{-1}\mathcal{P}_j$ for the true parameter $\check{c}$, and $\rho > 0$ is a penalty parameter. We denote the adjoints as $\mathcal{L}^* : \mathsf{U}\rightarrow\mathsf{V}$. Furthermore, both $\mathcal{L}$ and $\mathcal{L}^*$ have a well-defined inverse, denoted by $\mathcal{L}^{-1}, \mathcal{L}^{-*}$. The Riesz map is denoted as $\mathcal{R}:\mathsf{V}\rightarrow\mathsf{U}$.

\subsection{Contributions}
It has been shown empirically that this reformulation can improve the optimization landscape, making the iterative process less sensitive to initialization, and hence leading to more robust inversion results~\cite{van2015penalty}. \emph{We innovate upon this previous work in the following three directions:}

\emph{First}, we treat the optimization problem~\eqref{eq:intro} in the continuous setting and thereby provide a common framework for analysis and numerical implementation of a wide range of inverse problems based on the weak form of the PDE. Considering a finite number of measurements, we present a Representer Theorem (see \Cref{thm:Representer}) which shows that the estimated state lives in a finite-dimensional subspace of $\mathsf{U}$. This leads to a re-formulation of the traditional reduced approach for PDE-constrained optimization with an objective function equipped with a \textit{parameter-dependent residual weight}. In particular, it simplifies to
\[
\min_c  J(c):={\textstyle{\frac{1}{2}}}\sum_{i=1}^n\|\mathbf{e}_i(c)\|_{(I + \rho^{-1} G(c))^{-1}}^2,
\]
where $\mathbf{e}_i(c) \in \mathbb{R}^n$ denotes the $i^\text{th}$ data-residual whose $j$-th element is given by $e_{ij}(c) = \mathcal{P}_i\mathcal{L}(c)^{-1}\mathcal{P}_j- d_{ij}$, and $G(c)\in \mathbb{R}^{n\times n}$ is a positive semi-definite matrix with elements $G_{ij}(c) = \mathcal{P}_i\left(\mathcal{L}^*(c)\mathcal{R}\mathcal{L}(c)\right)^{-1}\mathcal{P}_j$, $1\leq i,j\leq n$. %

\emph{Second}, based on the Representer Theorem, we analyze the two limiting cases, and provide an interpretation of the finite-dimensional residuals $\mathbf{e}_i\in\mathbb{R}^n$ in terms of the underlying function spaces $\mathsf{U}$ and $\mathsf{V}$. In particular, we show that the relaxed formulation~\eqref{eq:intro} embodies a rich interplay between
\begin{itemize}
    \item the finite-dimensional \emph{data-residual} $\mathbf{e}_i(c)\in \mathbb{R}^n$,
    \item the \emph{solution-residual} $\mathcal{L}(c)^{-1}\mathcal{P}_i - \check{u}_i \in \mathsf{U}$,
    \item the \emph{PDE-residual}  $\mathcal{P}_i - \mathcal{L}(c)\check{u}_i \in \mathsf{V}$.
\end{itemize}
Under some simplifying assumptions, we show the following limiting behaviour of $J$:
\begin{itemize}
\item[$\rho\rightarrow \infty$:] $J(c)$ measures the norm of the \emph{solution-residual} projected on to a finite-dimensional subspace $\mathsf{P}_n = \text{span}\{\mathcal{R}\mathcal{P}_i\}_{i=1}^n \subset \mathsf{U}$.
 
\item[$\rho\rightarrow 0$:\,\,\,] $J(c)$ measures the norm of the \emph{PDE-residual}, projected on to a finite-dimensional subspace $\mathsf{W}_n = \text{span}\{\mathcal{R}^{-1}\mathcal{L}(c)^{-*}\mathcal{P}_i\}_{i=1}^n \subset \mathsf{V}$.
\end{itemize}

For a finite $\rho > 0$, the objective function interpolates between these two limiting situations. This is schematically depicted in Figure \ref{fig:one}. One might argue that it is more natural to use the PDE-residual because it can typically be parameterised to be affine in the coefficient $c$ for many common inverse problems, such as the Calder\'on problem~\cite{uhlmann2009electrical}, full-waveform inversion~\cite{van2015penalty} and inverse scattering~\cite{cakoni2005qualitative}. Precisely, in the $\rho\rightarrow 0$ case, under certain conditions, we can prove that the optimization problem is convex with respect to $c$ (see Corollary~\ref{thm:quadratic}), while the original objective function (when $\rho \rightarrow \infty$) is known to be highly non-convex, for all the examples mentioned above. 
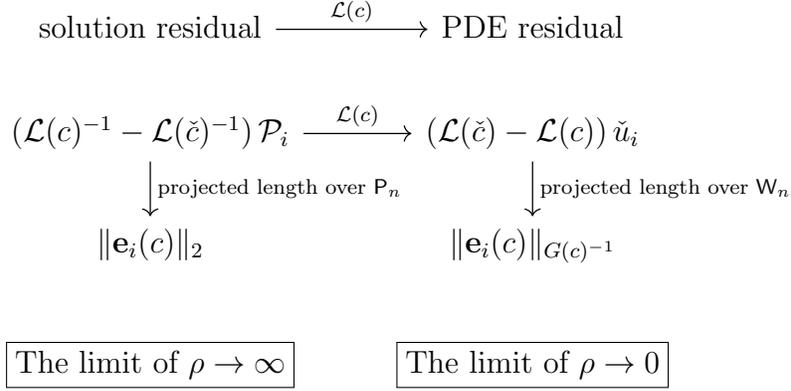
\begin{figure}
\begin{tikzcd} 
\text{solution residual}\arrow[r,"\mathcal{L}(c)"] &  \text{PDE residual}  \\
\left(\mathcal{L}(c)^{-1} - \mathcal{L}(\check{c})^{-1}\right)\mathcal{P}_i\arrow[d,"\text{projected length over $\mathsf{P}_n$}"]\arrow[r,"\mathcal{L}(c)"]&\left(\mathcal{L}(\check{c})-\mathcal{L}(c)\right)\check{u}_i\arrow[d,"\text{projected length over $\mathsf{W}_n$}"]\\ 
\|\mathbf{e}_i(c)\|_{2}&\|\mathbf{e}_i(c)\|_{G(c)^{-1}}\\
\boxed{\text{The limit of } \rho \rightarrow \infty}  & \boxed{\text{The limit of } \rho \rightarrow 0}
\end{tikzcd}
\caption{\label{fig:one}
The two limiting cases of the residual between simulated ($\mathcal{P}_i\mathcal{L}(c)^{-1}\mathcal{P}_j$) and measured ($d_{ij}=\mathcal{P}_i\mathcal{L}(\check{c})^{-1}\mathcal{P}_j$) data. When $\rho\rightarrow\infty$, we measure the $\ell^2$ norm of the \emph{data-residual}  ($e_{ij}(c) = \mathcal{P}_i\left(\mathcal{L}(c)^{-1} - \mathcal{L}(\check{c})^{-1}\right)\mathcal{P}_j$), which corresponds to the projected length of the \emph{solution-residual} $\left(\mathcal{L}(c)^{-1}- \mathcal{L}(\check{c})^{-1}\right)\mathcal{P}_i$ on $\mathsf{P}_n$. When $\rho\rightarrow 0$, we measure the \emph{$G(c)^{-1}$-weighted data residual}, which corresponds to the projected length of the \emph{PDE-residual} $\left(\mathcal{L}(c)-\mathcal{L}(\check{c})\right)\check{u}_i$ on  $\mathsf{W}_n $.}
\end{figure}

As a \emph{third contribution}, we show through a series of case studies how the matrix $G(\check{c})$ corresponding to the true coefficient can be estimated from the measured data $d_{ij}$ directly if the underlying space $\mathsf{U}$ is chosen appropriately. It turns out that this construction is closely related to recent progress on data-driven reduced order models pioneered by \cite{Borcea2018,Borcea2020,borcea2022waveform}.
Thus, this work goes towards bridging the gap between PDE-constrained optimization and data-driven reduced-order models and opens up an avenue for efficient implementation of the relaxed formulation.

\subsection{Outline}
The remainder of this paper is organized as follows: \Cref{sec:theory} introduces the proposed approach, detailing its formulation and integration with PDE-constraints. In~\Cref{sec:cases}, we present a series of case studies in which we apply the framework to a number of benchmark problems. Finally, \Cref{conclusions} concludes the paper with a summary of the findings and discussions on future research directions.

\section{Theory}\label{sec:theory}
In this section we present our main results. First, we pose the inverse problem as unconstrained optimization over an extended search space consisting of both the parameter $c$ and the states $\{u_i\}_{i=1}^n$, followed by an equivalent reformulation in terms of the parameter $c$ and auxiliary source terms $\{q_i\}_{i=1}^n$. Then, we present a Representer Theorem which expresses the solution of the optimization problem over states or sources as a finite linear combination of basis functions. Using this representation, we then reduce the PDE-constrained optimization problem to an unconstrained optimization problem over the parameters only. In particular, we show that this formulation is equivalent to a conventional reduced formulation with a \textit{coefficient-dependent} weight on the residuals.

\subsection{Preliminaries}
For the remainder of the paper we consider the weak form of the PDE
\begin{equation}\label{eq:weak}
\mathcal{A}_c(u, \phi) = \mathcal{P}(\phi), \quad \forall\phi\in \mathsf{U},
\end{equation}
where $\mathcal{A}_c:\mathsf{U}\times \mathsf{U} \rightarrow \mathbb{C}$ is a sesquilinear form (i.e., linear in the first argument and anti-linear in the second argument) with parameters $c \in \mathsf{C}$ and $\mathcal{P} \in \overline{\mathsf{U}^*}$ is an \emph{antilinear} operator. These slightly unconventional definitions are chosen to treat the case where $\mathsf{U}$ is a complex Hilbert space. We furthermore introduce the \emph{linear} Riesz map $\mathcal{R}:\overline{\mathsf{U}^*}\rightarrow \mathsf{U}$ which allows us to represent any $\mathcal{P}\in \overline{\mathsf{U}^*}$ by a corresponding $p\in\mathsf{U}$, i.e., $p = \mathcal{R}\mathcal{P}$, such that 
\[
\mathcal{P}(u) = \langle \mathcal{R}\mathcal{P}, u\rangle_{\mathsf{U}},\quad \langle p, u\rangle_{\mathsf{U}} = \left(\mathcal{R}^{-1}p\right)(u),
\]
where $\langle \cdot, \cdot\rangle_\mathsf{U}$ denotes the inner product in $\mathsf{U}$, which is also defined to be linear in the first argument and anti-linear in the second. Note that this definition of the Riesz map for complex-valued replaces the definition presented in the Introduction.

Throughout, we assume that the weak formulation is well-posed uniformly for all $c \in \mathsf{C}$ and that the Riesz representations $\{p_i\}_{i=1}^n$ of the functionals $\{\mathcal{P}_i\}_{i=1}^n$ are linearly independent. Given solutions $\{u_i\}_{i=1}^n$ corresponding to source terms $\{\mathcal{P}_i\}_{i=1}^n$, we consider measurements
\[
d_{ij} = \mathcal{P}_i(u_j),\quad i,j = 1,\ldots, n\,.
\]
The goal is to estimate the parameters $c$ from these measurements. The classical approach to doing so is by solving a PDE-constrained optimisation problem:
\begin{equation}\label{eq:FWI}
 \min_{c\in\mathsf{C}}  {\textstyle\frac{1}{2}} \sum_{i,j=1}^{n}   \big|\mathcal{P}_i(u_j) - d_{ij}\big|^2\,\,\,\,\text{s.t.}\,\,\,\,\mathcal{A}_c(u_j,\phi) = \mathcal{P}_j(\phi),\,\,\forall\phi\in \mathsf{U}.
 \end{equation}
{Even though this notation suggests that we assume full-array data (i.e., each source also acts as a receiver) we can easily modify this by including a binary weight $s_{ij}$ in the double sum to select only those source-receiver combinations that are actually measured. For ease of notation we leave it out for most derivations and will make specific remarks for those results that generalize to the case of incomplete data.}

This optimization problem often suffers from very non-convex misfit landscapes that are challenging for gradient-based optimization algorithms. Some of the difficulties come from the ``hard'' equality constraint $\mathcal{A}_c(u_j,\phi) = \mathcal{P}_j(\phi)$, $\forall \phi\in \U$. Motivated by the challenges, we instead consider a relaxation of this problem by using a ``soft'' constraint, as proposed by~\cite{van2015penalty}.
  
\subsection{Constraint-relaxation}
An alternative to the constrained optimization problem~\eqref{eq:FWI} is the following:
\begin{equation}\label{eq:eFWI}
\min_{c\in\mathsf{C},q\in\mathsf{U}^n}   {\textstyle\frac{1}{2}} \sum_{i,j=1}^{n}   \big|\mathcal{P}_i(u_j) - d_{ij}\big|^2 +   {\textstyle\frac{\rho}{2}} \sum_{j=1}^n  \|q_j\|_{\mathsf{U}}^2\,\,\,\,\text{s.t.}\,\,\mathcal{A}_c(u_j,\phi) = \left(\mathcal{P}_j + \mathcal{R}^{-1}q_j\right)(\phi),\,\,\forall\phi\in \mathsf{U},
 \end{equation}
where $q = (q_1, q_2, \ldots, q_n)$ represent auxiliary source terms, and $\rho > 0$ is a trade-off parameter. Rather than enforcing that each $u_j$ has to be exactly the PDE solution for a fixed parameter $c$ with the source term $\mathcal{P}_j$, we allow the source term to ``deviate'' from $\mathcal{P}_j$ by a quantity determined by $q_j$, and then penalize the size of $q_j$ through $\frac{\rho}{2} \|q_j\|_\U$. 

The introduced extra degree of freedom, i.e., an enlarged model capacity,  can help match the data more easily. It is also a relaxation from the original ``hard'' PDE constraint as~\eqref{eq:eFWI} will reduce to the traditional approach~\eqref{eq:FWI} in the limit of $\rho\rightarrow \infty$. In this sense, this formulation fits in the context of model extension, pioneered by \cite{Symes2014,Warner2016,Symes2020,symes2020a}. It has been shown that enlarging the search space can make the optimization problem have a better optimization landscape, and thus make it less sensitive to initialisation of the parameter $c$ when using gradient-based optimization algorithms. However, it is computationally not feasible to explicitly optimise over the enlarged space $\mathsf{C}\times \mathsf{U}^n$. To address this, we show in~\Cref{thm2} that this extended formulation is equivalent to minimizing a weighted residual over $\mathsf{C}$ only through the application of the Representer Theorem (see~\Cref{thm:Representer}).

Before moving forward to the theory, we first equivalently reformulate~\eqref{eq:eFWI} as
\begin{eqnarray}\label{eq:eFWI2}
\min_{c\in\mathsf{C},q\in\mathsf{U}^n}  {\textstyle\frac{1}{2}} \sum_{i,j=1}^{n}   \big|\mathcal{P}_i(v_j) - e_{ij}\big|^2 +   {\textstyle\frac{\rho}{2}} \sum_{j=1}^n  \|q_j\|_{\mathsf{U}}^2,\\
 \begin{split}
\text{s.t.}\,\,\mathcal{A}_c(u_j,\phi) &= \mathcal{P}_j(\phi)\,&\forall\phi\in \mathsf{U},\nonumber\\
\mathcal{A}_c(v_j,\phi) &= \mathcal{R}^{-1}q_j(\phi)\,&\forall\phi\in \mathsf{U},\nonumber
 \end{split}
 \end{eqnarray}
where the term 
\begin{equation}\label{eq:new_error}
    e_{ij} = d_{ij} - \mathcal{P}_i(u_j)
\end{equation}
depends on $c$ implicitly through $u_j$. We will use~\eqref{eq:eFWI2} for the following derivations.

\subsection{A representer theorem}
When the parameter $c$ is fixed, both~\eqref{eq:eFWI} and~\eqref{eq:eFWI2} are reduced to an inner problem, which is quadratic with respect to $q$. Next, we show that its solution has a finite-dimensional representation. To facilitate the discussion, we split the inner problems for $\{q_j\}_{j=1}^n$ in ~\eqref{eq:eFWI2} in $n$ independent quadratic subproblems and introduce
\begin{equation}\label{eq:eFWI_inner}
J_j(q_j) =  {\textstyle\frac{1}{2}}\sum_{i=1}^{n}   \big|\mathcal{P}_i(v_j) - e_{ij}\big|^2 +   {\textstyle\frac{\rho}{2}}   \|q_j\|_{\mathsf{U}}^2,
\end{equation}
where $v_j$ depends on $q_j\in \mathsf{U}$ through the weak form of the PDE:
\[
\mathcal{A}_c(v_j,\phi) = \langle q_j,\phi \rangle_\mathsf{U}\quad\forall\phi\in \mathsf{U}.
\]
Note that, since $c$ is fixed, $e_{ij}$ is fixed as defined above. 

\begin{thm}[A Representer Theorem]
\label{thm:Representer}
The functional defined in \eqref{eq:eFWI_inner} admits minimizers of the form
\begin{equation}\label{eq:representation}
q_j = \sum_{k=1}^n \overline{\alpha_{jk}}w_k,
\end{equation}
where $w_k \in \mathsf{U}$ satisfies an adjoint equation
\begin{equation}\label{eq:weakadjoint}
\overline{\mathcal{A}_c(\phi, w_k)} = \mathcal{P}_k(\phi),
\end{equation}
and the complex-valued coefficients $\{\alpha_{jk}\}_{k=1}^n$ are solutions to the following linear system
\[
\sum_{k} g_{ik}\alpha_{jk} + \rho \alpha_{ji} = e_{ij},\quad i=1, \ldots, n,
\]
with $g_{ik} = \langle w_i, w_k\rangle_\mathsf{U}$.
\end{thm}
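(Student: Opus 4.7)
The plan is to reduce the inner problem to a finite-dimensional quadratic program by rewriting each observation $\mathcal{P}_i(v_j)$ as an inner product of $q_j$ against a single element of $\mathsf{U}$, namely the adjoint-equation solution $w_i$. Testing the weak formulation of $v_j$ against $\phi = w_i$ gives $\mathcal{A}_c(v_j,w_i) = \langle q_j,w_i\rangle_\mathsf{U}$; testing the adjoint equation~\eqref{eq:weakadjoint} against $\phi = v_j$ gives $\mathcal{P}_i(v_j) = \overline{\mathcal{A}_c(v_j,w_i)}$. Combining the two and using conjugate symmetry of the Hermitian inner product yields the key identity $\mathcal{P}_i(v_j) = \langle w_i,q_j\rangle_\mathsf{U}$. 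Existence and uniqueness of each $w_i$ follows from the standing well-posedness assumption on~\eqref{eq:weak} applied to the adjoint problem.

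Substituting this identity into~\eqref{eq:eFWI_inner} turns $J_j$ into the strictly convex, coercive quadratic
\[
J_j(q_j) = \tfrac{1}{2}\sum_{i=1}^n \bigl|\langle w_i,q_j\rangle_\mathsf{U} - e_{ij}\bigr|^2 + \tfrac{\rho}{2}\|q_j\|_\mathsf{U}^2,
\]
which admits a unique minimizer characterised by first-order stationarity. A directional derivative in a real parameter along an arbitrary $\delta q \in \mathsf{U}$, followed by the standard complex trick of replacing $\delta q$ by $\mathrm{i}\,\delta q$ to access the imaginary part, forces the vector
\[
\rho\, q_j + \sum_{i=1}^n \overline{\bigl(\langle w_i,q_j\rangle_\mathsf{U} - e_{ij}\bigr)}\,w_i
\]
to vanish in $\mathsf{U}$. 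Solving for $q_j$ immediately exhibits the representation~\eqref{eq:representation} with coefficients $\alpha_{jk} := -\rho^{-1}\bigl(\langle w_k,q_j\rangle_\mathsf{U} - e_{kj}\bigr)$, which in particular shows the minimizer lies in $\mathrm{span}\{w_1,\dots,w_n\}$.

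To obtain the stated linear system, I would substitute the ansatz $q_j = \sum_k \overline{\alpha_{jk}} w_k$ back into the defining relation $\langle w_i,q_j\rangle_\mathsf{U} - e_{ij} = -\rho\,\alpha_{ji}$. Conjugate symmetry gives $\langle w_i,q_j\rangle_\mathsf{U} = \sum_k \alpha_{jk}\langle w_i,w_k\rangle_\mathsf{U} = \sum_k g_{ik}\alpha_{jk}$, so rearranging produces exactly $\sum_k g_{ik}\alpha_{jk} + \rho\,\alpha_{ji} = e_{ij}$. The main delicacy throughout is the sesquilinear bookkeeping: checking that the conjugate built into the adjoint equation is precisely what is required to turn $\mathcal{A}_c(v_j,w_i)$ into $\langle w_i,q_j\rangle_\mathsf{U}$ and not its conjugate, and that the real-variation calculus is genuinely promoted to a vanishing equation in $\mathsf{U}$ via the $\mathrm{i}\,\delta q$ trick rather than merely a vanishing real part. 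Beyond these subtleties the argument reduces to linear algebra and the strict convexity afforded by $\rho>0$.
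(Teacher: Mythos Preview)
Your proposal is correct and follows essentially the same route as the paper: the key identity $\mathcal{P}_i(v_j)=\langle w_i,q_j\rangle_{\mathsf{U}}$ is derived in the same way, and the subsequent variational argument (your real-direction derivative plus the $\mathrm{i}\,\delta q$ trick) is exactly the content of the paper's auxiliary Lemma~\ref{lemma:optimality}, which handles the complex case by splitting $q=q_R+\imath q_I$ and computing the Fr\'echet derivative in $(q_R,q_I)$. The only organizational difference is that the paper packages the quadratic minimization into that appendix lemma and then invokes it, whereas you carry out the stationarity computation inline.
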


\begin{proof}
Noting that
$$\mathcal{P}_i({v}_j) = \overline{\mathcal{A}_c(v_j, w_i)} = \overline{ \langle q_j, {w}_i \rangle_{\mathsf{U}}}  = \langle {w}_i, q_j \rangle_{\mathsf{U}},$$
we can rewrite~\eqref{eq:eFWI_inner} as
$$J_j(q_j)={\textstyle\frac{1}{2}}\sum_{i=1}^n |\langle w_i,{q_j}\rangle_{\mathsf{U}} - e_{ij}|^2 + {\textstyle{ {\textstyle\frac{\rho}{2}}}}\|q_j\|_{\mathsf{U}}^2.$$
We can now directly apply Lemma \ref{lemma:optimality} (see Appendix) to find the desired result. 
\end{proof}
{%
\begin{rem}
As we apply the representer theorem separately to each source index, $j$, the statement in Theorem 1 extends to the case of incomplete array data by only including the relevant receivers for each source.
\end{rem}
}
\subsection{A reduced formulation}
With the above-stated result, we can now derive a reduced formulation of~\eqref{eq:eFWI2} that depends on the parameter $c$ only.
\begin{cor}
[Variable metric reduced formulation]
\label{thm2}
The relaxed problem \eqref{eq:eFWI2} in $(c, q) \in \mathsf{C}\times \mathsf{U}^n$ can be formulated equivalently in reduced form as
\[
\min_{c\in\mathsf{C}} J(c),
\]
with
\begin{equation}\label{eq:new_obj}
J(c) =  {\textstyle\frac{1}{2}}\sum_{j=1}^n\|\mathbf{e}_j(c)\|_{(I+\rho^{-1}G(c))^{-1}}^2 =   {\textstyle\frac{1}{2}}\text{trace}\left[E(c)^*(I+\rho^{-1}G(c))^{-1}E(c)\right],
\end{equation}
with
${\mathbf{e}}_j(c) = [e_{1j}(c),\ldots,e_{nj}(c)]^\top$, and the matrix $G \in \mathbb{C}^{n\times n}$ as defined in Theorem \ref{thm:Representer}.
\end{cor}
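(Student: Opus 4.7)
The plan is to exploit the fact that, with $c$ fixed, the objective in \eqref{eq:eFWI2} decouples across the source index $j$, so minimising over $q\in\mathsf{U}^n$ amounts to minimising each subproblem $J_j$ in \eqref{eq:eFWI_inner} separately and then summing the optimal values. Because $J_j$ is strictly convex quadratic in $q_j$, it has a unique minimiser, and by Theorem~\ref{thm:Representer} this minimiser has the form $q_j^{\star} = \sum_{k=1}^n \overline{\alpha_{jk}}\,w_k$ with $\alpha^j := (\alpha_{j1},\ldots,\alpha_{jn})^\top$ solving the Hermitian positive-definite system $(G+\rho I)\alpha^j = \mathbf{e}_j(c)$, where $G$ is the Gram matrix of $\{w_k\}$.

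Next, I would evaluate $J_j(q_j^\star)$ in closed form via two observations. First, using the identity $\mathcal{P}_i(v_j^\star)=\langle w_i,q_j^\star\rangle_{\mathsf{U}}$ recalled in the proof of Theorem~\ref{thm:Representer}, the optimality equation $\sum_k g_{ik}\alpha_{jk}+\rho\alpha_{ji}=e_{ij}$ rearranges to $\mathcal{P}_i(v_j^\star)-e_{ij}=-\rho\alpha_{ji}$, so the fidelity part of $J_j(q_j^\star)$ equals $\tfrac{\rho^2}{2}\|\alpha^j\|^2$. Second, expanding $\|q_j^\star\|_{\mathsf{U}}^2 = \langle q_j^\star,q_j^\star\rangle_{\mathsf{U}}$ via sesquilinearity and the definition of $G$ yields $\|q_j^\star\|_{\mathsf{U}}^2 = (\alpha^j)^* G\,\alpha^j$. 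Adding the two contributions gives the compact Hermitian form $J_j(q_j^\star) = \tfrac{\rho}{2}(\alpha^j)^*(\rho I + G)\alpha^j$.

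Plugging in $\alpha^j = (\rho I + G)^{-1}\mathbf{e}_j(c)$ and using $(\rho I + G)^{-1} = \rho^{-1}(I+\rho^{-1}G)^{-1}$ collapses this to $J_j(q_j^\star) = \tfrac{1}{2}\,\mathbf{e}_j(c)^{*}(I+\rho^{-1}G)^{-1}\mathbf{e}_j(c) = \tfrac{1}{2}\|\mathbf{e}_j(c)\|_{(I+\rho^{-1}G(c))^{-1}}^2$. Summing over $j=1,\ldots,n$ and recognising the sum $\sum_j v_j^* M v_j = \operatorname{trace}(V^*MV)$ with $V=E(c)=[\mathbf{e}_1(c),\ldots,\mathbf{e}_n(c)]$ produces both expressions in \eqref{eq:new_obj}.

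The main obstacle I anticipate is purely bookkeeping: keeping the complex-conjugation conventions consistent when converting between the representer coefficients $\overline{\alpha_{jk}}$ and the quadratic form $\|q_j^\star\|_{\mathsf{U}}^2$, and in particular verifying that the correct Hermitian combination $(\alpha^j)^* G\alpha^j$ appears (rather than its transpose or conjugate). Once that is handled using the Hermitian symmetry of $G$, the remaining manipulations are routine linear algebra with the Schur-like identity $(\rho I+G)^{-1}=\rho^{-1}(I+\rho^{-1}G)^{-1}$.
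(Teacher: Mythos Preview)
Your proposal is correct and follows essentially the same route as the paper: reduce to the per-source subproblems $J_j$, invoke the Representer Theorem to write $q_j^\star$ in terms of $\boldsymbol{\alpha}_j$, compute the penalty term as $(\boldsymbol{\alpha}_j)^*G\boldsymbol{\alpha}_j$, and substitute $(G+\rho I)\boldsymbol{\alpha}_j=\mathbf{e}_j$. The one minor difference is that the paper leaves the fidelity term as $\tfrac12\|G\boldsymbol{\alpha}_j-\mathbf{e}_j\|_2^2$ and then appeals to the Woodbury identity for the final simplification, whereas you use the optimality equation directly to reduce that term to $\tfrac{\rho^2}{2}\|\boldsymbol{\alpha}_j\|^2$ and finish with the elementary factorisation $(\rho I+G)^{-1}=\rho^{-1}(I+\rho^{-1}G)^{-1}$; your route is slightly more transparent and sidesteps the need for Woodbury.
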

\begin{proof}
Using the result from Theorem~\ref{thm:Representer}, we have that, at the optimizer,
$$\langle w_i, q_j\rangle_\mathsf{U} = \sum_{k=1}
^n g_{ik}\alpha_{jk},$$
$$\|q_j\|_{\mathsf{U}}^2 = \sum_{k=1}^n \sum_{l=1}^n\overline{\alpha_{jk}}\alpha_{jl}g_{k,l}.$$
Then
$$J_j(q_j)=  {\textstyle\frac{1}{2}}\|G\boldsymbol{\alpha}_j - \mathbf{e}_j\|_2^2 +  {\textstyle\frac{\rho}{2}}\boldsymbol{\alpha}_j^* G\boldsymbol{\alpha}_j.$$
Using that $(G + \rho I)\boldsymbol{\alpha}_j = \mathbf{e}_j$ and the Woodbury matrix identity, we immediately find
$$J_j(q_j) =  {\textstyle\frac{1}{2}}\mathbf{e}_j^*(I + \rho^{-1}G)^{-1}\mathbf{e}_j,$$
after simplification, which yields~\eqref{eq:new_obj}.
\end{proof}
{\begin{rem}
In the case of incomplete array data, the Gram matrix would generally depend on $j$, containing only the inner-products of the adjoint solutions for the corresponding receivers.
\end{rem}
}
\subsection{Limiting cases}
Here, we analyse the limiting behaviour of the objective defined in~\eqref{eq:new_obj} for $\rho\rightarrow 0$ and $\rho \rightarrow \infty$, which sheds light on the interpolative nature for a finite $\rho >0$. 
\begin{lma}
For $\rho \rightarrow \infty$ we can express the objective as
\[
J(c) =  {\textstyle\frac{1}{2}}\text{trace}\left[E(c)^*E(c)\right] + \mathcal{O}(\rho^{-1}).
\]
\end{lma}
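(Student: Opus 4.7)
The plan is to start from the reduced formulation given by Corollary~\ref{thm2}, namely
\[
J(c) = {\textstyle\frac{1}{2}}\,\text{trace}\!\left[E(c)^*(I+\rho^{-1}G(c))^{-1}E(c)\right],
\]
and expand the middle factor $(I+\rho^{-1}G(c))^{-1}$ as a Neumann series in the small parameter $\rho^{-1}$. For fixed $c\in\mathsf{C}$, the matrix $G(c)\in\mathbb{C}^{n\times n}$ is a finite-dimensional Hermitian positive semidefinite Gram matrix, hence $\|G(c)\|_2<\infty$. Therefore, for $\rho$ large enough that $\rho^{-1}\|G(c)\|_2<1$, the series
\[
(I+\rho^{-1}G(c))^{-1} = I - \rho^{-1}G(c) + \rho^{-2}G(c)^2 - \cdots
\]
converges absolutely in the operator norm.

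Substituting this expansion into the trace and using linearity, I would obtain
\[
J(c) = {\textstyle\frac{1}{2}}\,\text{trace}[E(c)^*E(c)] - {\textstyle\frac{1}{2\rho}}\,\text{trace}[E(c)^*G(c)E(c)] + \mathcal{O}(\rho^{-2}),
\]
and then absorb the $\rho^{-1}$ and higher-order terms into a single $\mathcal{O}(\rho^{-1})$ remainder. More explicitly, the remainder can be controlled by
\[
\left|{\textstyle\frac{1}{2}}\,\text{trace}\!\left[E(c)^*\bigl((I+\rho^{-1}G(c))^{-1}-I\bigr)E(c)\right]\right| \le {\textstyle\frac{1}{2}}\|(I+\rho^{-1}G(c))^{-1}-I\|_2\,\|E(c)\|_F^2,
\]
and since $(I+\rho^{-1}G(c))^{-1}-I = -\rho^{-1}G(c)(I+\rho^{-1}G(c))^{-1}$, whose norm is bounded by $\rho^{-1}\|G(c)\|_2/(1-\rho^{-1}\|G(c)\|_2)$, the asymptotic bound $\mathcal{O}(\rho^{-1})$ follows immediately.

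There is really no hard step here; the estimate is essentially a first-order Taylor expansion of the matrix inverse. The only mild subtlety worth noting is that, as stated, the implicit constant in $\mathcal{O}(\rho^{-1})$ depends on $c$ through $\|G(c)\|_2$ and $\|E(c)\|_F$; if a uniform bound over $\mathsf{C}$ is desired, one would need to invoke the assumption of uniform well-posedness of the weak formulation (stated in Section~\ref{sec:theory}) to produce a $c$-independent bound on $\|G(c)\|_2$. Beyond that, the proof is a direct computation.
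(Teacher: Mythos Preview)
Your proposal is correct and follows essentially the same approach as the paper: both start from the reduced objective in Corollary~\ref{thm2} and expand $(I+\rho^{-1}G)^{-1}$ as a Neumann series valid for $\rho>\|G\|$. Your version is slightly more detailed (explicit remainder bound, remark on $c$-dependence of the constant), but the argument is the same.
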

\begin{proof}
This follows directly from \eqref{eq:new_obj} by expressing $(I + \rho^{-1}G)^{-1} = \sum_{k=0}^\infty (-\rho)^{-k}G^k$ which is valid for $\rho > \|G\|$, i.e., $\rho$ is larger than the largest eigenvalue of $G$.
\end{proof}
\begin{lma}
\label{lma:rho_zero}
We can write the objective for $\rho \rightarrow 0$ as
\[
J(c) = {\textstyle\frac{\rho}{2}}\text{trace}\left[E(c)^*G(c)^{-1}E(c)\right] + \mathcal{O}(\rho^2).
\]
\end{lma}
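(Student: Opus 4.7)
The plan is to reduce the statement to a Neumann-series expansion of $(\rho I + G)^{-1}$ around $\rho = 0$, just as the previous lemma handled the regime $\rho \to \infty$ via the Neumann expansion of $(I+\rho^{-1}G)^{-1}$. The key algebraic manipulation is to rewrite the weight matrix so that $\rho$ appears as a small additive perturbation of $G$ rather than as an inverse. Concretely, I would first observe that
\[
\bigl(I + \rho^{-1}G(c)\bigr)^{-1} = \bigl(\rho^{-1}(\rho I + G(c))\bigr)^{-1} = \rho\,(\rho I + G(c))^{-1},
\]
so that the objective from Corollary \ref{thm2} becomes
\[
J(c) = \tfrac{\rho}{2}\,\mathrm{trace}\!\left[E(c)^*(\rho I + G(c))^{-1}E(c)\right].
\]

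Next, I would invoke invertibility of $G(c)$. This follows from the standing hypothesis that the Riesz representers $\{p_i\}$ of $\{\mathcal{P}_i\}$ are linearly independent, combined with the fact that $\mathcal{L}(c)$ is a bijection: the adjoint solutions $\{w_k\}$ defined by \eqref{eq:weakadjoint} are then linearly independent in $\mathsf{U}$, hence their Gram matrix $G(c)$ is strictly positive definite and invertible. With this, I can factor
\[
(\rho I + G(c))^{-1} = G(c)^{-1}\bigl(I + \rho\,G(c)^{-1}\bigr)^{-1},
\]
which is valid whenever $\rho < 1/\|G(c)^{-1}\|$, i.e., for $\rho$ smaller than the smallest eigenvalue of $G(c)$.

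Finally I would expand the latter inverse as a Neumann series,
\[
\bigl(I + \rho\,G(c)^{-1}\bigr)^{-1} = \sum_{k=0}^{\infty}(-\rho)^{k}\,G(c)^{-k} = I - \rho\,G(c)^{-1} + \mathcal{O}(\rho^{2}),
\]
so that $(\rho I + G(c))^{-1} = G(c)^{-1} + \mathcal{O}(\rho)$. Substituting this into the factored expression for $J(c)$ and using linearity and cyclicity of the trace yields
\[
J(c) = \tfrac{\rho}{2}\,\mathrm{trace}\!\left[E(c)^*G(c)^{-1}E(c)\right] + \mathcal{O}(\rho^{2}),
\]
which is the claim.

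There is no genuine obstacle here beyond bookkeeping; the only subtlety worth flagging is the invertibility of $G(c)$, which needs to be justified from the standing assumptions (it should arguably be stated alongside the lemma, or noted in the proof). Once that is in hand, the entire argument is a two-line Neumann expansion mirroring the $\rho \to \infty$ lemma, with the roles of $I$ and $G(c)$ swapped.
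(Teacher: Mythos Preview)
Your proposal is correct and mirrors the paper's own proof almost exactly: the paper also justifies invertibility of $G$ via linear independence of the $\{p_i\}$ (hence of the $\{w_i\}$), rewrites $(I+\rho^{-1}G)^{-1}=\rho G^{-1}(I+\rho G^{-1})^{-1}$, and expands the second factor as a Neumann series valid for $\rho$ below the smallest eigenvalue of $G$. Your only addition is the intermediate step $\rho(\rho I+G)^{-1}$, which is harmless bookkeeping.
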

\begin{proof}
Since we assumed that $\{p_i\}_{i=1}^n$ are linearly independent, so are $\{w_i\}_{i=1}^n$ and $G$ is invertible. We can then express $(I + \rho^{-1}G)^{-1}=\rho G^{-1}(I + \rho G^{-1})^{-1} = \rho G^{-1}\sum_{k=0}^{\infty} (-\rho)^k G^{-k}$, which is valid for $ \|G^{-1}\| < 1/\rho$, i.e., $\rho$ should be smaller than the smallest eigenvalue of the matrix $G$. The desired result follows immediately.
\end{proof}
Based on these limiting cases, we define the following
\begin{eqnarray}
J_\infty(c) &=& \lim_{\rho\rightarrow \infty} J(c) ={\textstyle\frac{1}{2}}\text{trace}\left[E(c)^*E(c)\right]    \label{eq:Jinfty}\\
J_0(c) &=& \lim_{\rho\rightarrow 0} \rho^{-1}J(c) =  {\textstyle\frac{1}{2}}\text{trace}\left[E(c)^*G(c)^{-1}E(c)\right].\label{eq:J0}
\end{eqnarray}
{\begin{rem}
The asymptotic analysis also holds for the case of incomplete array data, albeit that the resulting expressions are slightly less compact.
\end{rem}
}
\subsection{Noiseless data}
We can furthermore interpret these limiting cases in terms of projections of certain infinite-dimensional residuals on finite-dimensional subspaces. We assume noisless data $d_{ij} = \mathcal{P}_i(\check{u}_j)$ with $\check{u}_j$ the (weak) solution corresponding to $\mathcal{P}_j$ for the true coeffienct $\check{c}$, $1\leq i,j \leq n$.

\begin{thm}\label{thm:sol_proj}
When the Riesz representers $\{p_i\}_{i=1}^n$ are orthonormal and the measurements are noiseless, the functional $J_\infty$ defined in \eqref{eq:Jinfty} can be equivalently expressed in terms of an orthogonal projection of the solution-residual $u_i(c) - \check{u}_i \in \mathsf{U}$ on $\mathsf{P}_n = \text{span}\{p_i\}_{i=1}^n$ as
\[
J_{\infty}(c) = {\textstyle\frac{1}{2}}\sum_{j=1}^n\|\Pi_{\mathsf{P}_n}(u_j(c) - \check{u}_j)\|_{\mathsf{U}}^2,
\]
where $\Pi_{\mathsf{P}_n}$ is the orthogonal projection on $\mathsf{P}_n$, $u_j(c)$ is the weak solution corresponding to coefficient $c$, and $\check{u}_j$ is the true state corresponding to the true coefficient $\check{c}$.
\end{thm}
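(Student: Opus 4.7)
The plan is to unwind the definitions on both sides and recognize that the equality reduces to a standard Parseval-type identity on the finite-dimensional subspace $\mathsf{P}_n$. The right-hand side involves a Hilbert-space projection, while the left-hand side is a sum of squared moduli of the data residuals $e_{ij}$, so the key observation is simply that the $e_{ij}$ are precisely the coordinates of the solution-residual with respect to the orthonormal basis $\{p_i\}$.

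First I would use the noiseless data assumption $d_{ij} = \mathcal{P}_i(\check{u}_j)$ and the linearity (resp.\ anti-linearity) of $\mathcal{P}_i$ to rewrite
\[
e_{ij}(c) = \mathcal{P}_i(u_j(c)) - \mathcal{P}_i(\check{u}_j) = \mathcal{P}_i\bigl(u_j(c) - \check{u}_j\bigr).
\]
Next I would invoke the Riesz representation introduced in the preliminaries, $\mathcal{P}_i(u) = \langle p_i, u\rangle_{\mathsf{U}}$, to obtain
\[
e_{ij}(c) = \bigl\langle p_i,\, u_j(c) - \check{u}_j\bigr\rangle_{\mathsf{U}},
\]
so that $|e_{ij}(c)|^2 = |\langle p_i,\, u_j(c)-\check{u}_j\rangle_{\mathsf{U}}|^2$, irrespective of the conjugation convention.

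Then, using that $\{p_i\}_{i=1}^n$ is orthonormal in $\mathsf{U}$, the orthogonal projection onto $\mathsf{P}_n$ is
\[
\Pi_{\mathsf{P}_n}(v) = \sum_{i=1}^n \langle v, p_i\rangle_{\mathsf{U}}\, p_i,
\]
and Parseval's identity for the finite orthonormal system yields
\[
\|\Pi_{\mathsf{P}_n}(v)\|_{\mathsf{U}}^2 = \sum_{i=1}^n |\langle p_i, v\rangle_{\mathsf{U}}|^2.
\]
Applying this with $v = u_j(c) - \check{u}_j$ and substituting into the expression from the previous step gives
\[
\sum_{i=1}^n |e_{ij}(c)|^2 = \|\Pi_{\mathsf{P}_n}(u_j(c) - \check{u}_j)\|_{\mathsf{U}}^2.
\]
Finally, summing over $j$ and comparing with the definition $J_\infty(c) = \frac{1}{2}\operatorname{trace}[E(c)^*E(c)] = \frac{1}{2}\sum_{i,j} |e_{ij}(c)|^2$ from \eqref{eq:Jinfty} yields the claim.

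The argument is essentially a bookkeeping exercise; the only place to be careful is the complex-valued convention (inner product linear in the first slot, $\mathcal{P}_i$ anti-linear), which however drops out once one takes moduli, so there is no substantive obstacle. The statement's main content is really the identification of the $\ell^2$-norm of the data residuals $\mathbf{e}_j$ with a projected solution-residual norm, a point worth emphasizing since it is this interpretation, rather than the proof itself, that motivates the $\rho \to \infty$ half of Figure \ref{fig:one}.
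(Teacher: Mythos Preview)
Your proposal is correct and follows essentially the same route as the paper's proof: both expand the orthogonal projection in the orthonormal basis $\{p_i\}$, identify the expansion coefficients with the data residuals $e_{ij}$ (up to sign/conjugation, which is irrelevant under moduli), and then use orthonormality/Parseval to equate the projected norm with $\sum_i |e_{ij}|^2$. Your write-up is arguably a bit cleaner in that it names Parseval explicitly rather than expanding the double sum $\sum_{i,i'} \overline{e_{ij}}\langle p_i,p_{i'}\rangle_{\mathsf{U}}\, e_{i'j}$ by hand, but the content is identical.
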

\begin{proof}
Since we assumed $\{p_i\}_{i=1}^n$ to be orthonormal, the projection of the solution residual $u_j(c) - \check{u}_j$ over $\mathsf{P}_n $ is given by 
\[
\Pi_{\mathsf{P}_n}(u_j(c) - \check{u}_j) = \sum_{i=1}^n \langle u_j(c) - \check{u}_j,p_i\rangle_{\mathsf{U}}\,p_i.
\]
Taking the norm we obtain
\[
{\textstyle\frac{1}{2}}\sum_{j=1}^n\|\Pi_{\mathsf{P}_n}(u_j(c) - \check{u}_j)\|_{\mathsf{U}}^2 = \sum_{j=1}^n \left\|\sum_{i=1}^n \overline{e_{ij}(c)}p_i\right\|_{\mathsf{U}}^2 = \sum_{j,i,i'=1}^n \overline{e_{ij}(c)}\langle p_i,p_{i'}\rangle_\mathsf{U}\, {e_{i'j}(c)},
\]
with $e_{ij}(c) = d_{ij} - \langle p_i, u_j\rangle_{\mathsf{U}}$ as defined before (see Equation~\eqref{eq:new_error}).
Using orthonormality of $\{p_i\}_{i=1}^n$ again we get the desired result.
\end{proof}
{In Theorem~\ref{thm:sol_proj} below, we assume that $\{p_i\}_{i=1}^n$ forms an orthonormal basis. If this is not the case, $J_\infty$ still measures the $\|\cdot\|_{\mathsf{U}}$ norm of the projected solution residual, but it represents a weighted rather than an orthogonal projection.}

\begin{thm}\label{thm:PDE_proj}
For noiseless measurements, the functional $J_0(c)$ can be equivalently expressed in terms of an orthogonal projection of the PDE-residual $\mathcal{E}_j(c) = \mathcal{A}_c(\check{u}_j,\cdot) -  \mathcal{A}_{\check{c}}(\check{u}_j,\cdot) \in \mathsf{V}$, $j=1,\ldots,n$, on $\mathsf{W}_n = \text{span}\{w_j\}_{j=1}^n$ as
\[
J_0(c) = {\textstyle\frac{1}{2}}\sum_{j=1}^n \|\Pi_{\mathsf{W}_n}\mathcal{R}\mathcal{E}_j(c)\|_{\mathsf{U}}^2,
\]
with $\check{u}_j$ denoting the true state corresponding to the true coefficient $\check{c}$, and $w_j$ the adjoint state defined in~\eqref{eq:weakadjoint} with the current coefficient $c$.
\end{thm}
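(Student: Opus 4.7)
The plan is to express each data residual $e_{ij}(c)$ as an $\mathsf{U}$-inner product between the adjoint state $w_i$ and the Riesz representer of the PDE-residual, and then recognize the quadratic form $\mathbf{e}_j^*G(c)^{-1}\mathbf{e}_j$ appearing in the definition \eqref{eq:J0} of $J_0$ as the squared $\mathsf{U}$-norm of the orthogonal projection of $\mathcal{R}\mathcal{E}_j(c)$ onto $\mathsf{W}_n = \mathrm{span}\{w_k\}_{k=1}^n$.

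First, I would use the noiseless-data hypothesis to write $e_{ij}(c) = d_{ij} - \mathcal{P}_i(u_j(c)) = \mathcal{P}_i(\check{u}_j - u_j(c))$. Applying the weak adjoint equation \eqref{eq:weakadjoint}, namely $\mathcal{P}_i(\phi) = \overline{\mathcal{A}_c(\phi, w_i)}$, yields $e_{ij}(c) = \overline{\mathcal{A}_c(\check{u}_j - u_j(c), w_i)}$. Because both $\mathcal{A}_c(u_j(c), \phi) = \mathcal{P}_j(\phi)$ and $\mathcal{A}_{\check{c}}(\check{u}_j, \phi) = \mathcal{P}_j(\phi)$ hold for every test function $\phi \in \mathsf{U}$, the sesquilinearity of $\mathcal{A}_c$ gives $\mathcal{A}_c(\check{u}_j - u_j(c), \phi) = \mathcal{A}_c(\check{u}_j,\phi) - \mathcal{A}_{\check{c}}(\check{u}_j,\phi) = \mathcal{E}_j(c)(\phi)$. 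Writing $\mathcal{E}_j(c)(\phi) = \langle \mathcal{R}\mathcal{E}_j(c), \phi\rangle_\mathsf{U}$ via the Riesz map then produces the key identity $e_{ij}(c) = \overline{\mathcal{E}_j(c)(w_i)} = \langle w_i, \mathcal{R}\mathcal{E}_j(c)\rangle_\mathsf{U}$.

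Next I would expand $\Pi_{\mathsf{W}_n}\mathcal{R}\mathcal{E}_j(c) = \sum_k \beta_{jk} w_k$ and use the orthogonality conditions $\langle \mathcal{R}\mathcal{E}_j(c) - \Pi_{\mathsf{W}_n}\mathcal{R}\mathcal{E}_j(c), w_i\rangle_\mathsf{U} = 0$. Combined with the identity above for $e_{ij}$ and with $G$ being Hermitian (since $G_{ik} = \langle w_i, w_k\rangle_\mathsf{U}$), this reduces to a linear system that expresses $\boldsymbol{\beta}_j$ explicitly in terms of $G^{-1}\mathbf{e}_j$ (with appropriate conjugation). Substituting back into $\|\Pi_{\mathsf{W}_n}\mathcal{R}\mathcal{E}_j(c)\|_\mathsf{U}^2 = \sum_{k,l}\beta_{jk}\overline{\beta_{jl}}G_{kl}$ and cancelling one factor of $G$ against $G^{-1}$ collapses the expression to $\mathbf{e}_j(c)^*G(c)^{-1}\mathbf{e}_j(c)$. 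Summing over $j$ and comparing with \eqref{eq:J0} finishes the proof.

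The main delicate point I anticipate is bookkeeping of complex conjugates: the sesquilinearity of $\mathcal{A}_c$, the antilinearity of $\mathcal{P}_i$, and the convention that $\langle \cdot,\cdot\rangle_\mathsf{U}$ is antilinear in the second argument each insert a conjugate that has to be tracked carefully through the Gram-matrix manipulation (for real Hilbert spaces all of this trivializes). Invertibility of $G(c)$, needed for the normal equations, is already guaranteed by the linear independence of $\{p_i\}_{i=1}^n$ assumed in the preliminaries together with well-posedness of the adjoint problem, so no additional nondegeneracy argument is required.
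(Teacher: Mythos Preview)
Your proposal is correct and follows essentially the same route as the paper: both arguments identify $e_{ij}(c)$ with (the conjugate of) $\mathcal{E}_j(c)(w_i)=\langle \mathcal{R}\mathcal{E}_j(c),w_i\rangle_\mathsf{U}$ via the adjoint equation and the two weak formulations, then solve the normal equations for the orthogonal projection onto $\mathsf{W}_n$ and collapse $\boldsymbol{\beta}_j^{\,*}G\boldsymbol{\beta}_j$ (equivalently $\boldsymbol{\alpha}_j^{\,*}G\boldsymbol{\alpha}_j$) to $\mathbf{e}_j^*G^{-1}\mathbf{e}_j$. Your caveat about conjugate bookkeeping is exactly the only place where care is needed, and your remark on the invertibility of $G$ matches the paper's standing assumption.
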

\begin{proof}
Consider the Riesz presentation of the PDE-residual $s_j(c) := \mathcal{R}\mathcal{E}_j(c) \in \U $, for any $j= 1,\ldots, n$. Its orthogonal projection on $\mathsf{W}_n = \text{span}\{w_i\}_{i=1}^n \subset \U$ is given by
\[
\Pi_{\mathsf{W}_n} s_j(c) = \sum_{i=1}^n \overline{\alpha_{ij}} w_i\,,
\]
where $\boldsymbol{\alpha}_{j} = G^{-1}\overline{\mathbf{r}_j}$, $\boldsymbol{\alpha}_{j} = [{\alpha}_{1j},\ldots, {\alpha}_{nj}]^\top$, $\mathbf{r}_{j} = [{r}_{1j},\ldots, {r}_{nj}]^\top$  with $r_{ij} = \langle s_j(c),w_i\rangle_{\mathsf{U}} =  \langle \mathcal{R}\mathcal{E}_j(c) , w_i\rangle_{\U} = \mathcal{E}_j(c) w_i$ and the matrix $G$ with entries $g_{ij}=\langle w_i, w_j\rangle_\mathsf{U}$. First note that, since $\mathcal{A}_{\check{c}}(\check{u}_j,w_i)  = \mathcal{A}_{c}({u}_j,w_i)  = \mathcal{P}_j(w_i)$,
\[
r_{ij} = \mathcal{A}_c(\check{u}_j,w_i) -  \mathcal{A}_{\check{c}}(\check{u}_j,w_i) = \mathcal{A}_c(\check{u}_j,w_i) -  \mathcal{A}_{c}({u}_j,w_i) = \overline{\langle p_i,\check{u}_j\rangle} - \overline{\langle p_i,u_j\rangle} = \overline{e_{ij}},
\]
with $e_{ij} =d_{ij} -  \mathcal{P}_i(u_j) $ as defined in~\eqref{eq:new_error}. Taking the norm it follows that 
\[
\|\Pi_{\mathsf{W}_n} s_j(c)\|_{\mathsf{U}}^2 = \boldsymbol{\alpha}_j^*G\boldsymbol{\alpha}_j = \|\mathbf{e}_j\|^2_{G^{-1}}\,,
\]
where $\mathbf{e}_j = [e_{1j}, \ldots, e_{ij},\ldots, e_{nj}]^\top$.
\end{proof}
\begin{rem}
The results in  Theorems~\ref{thm:sol_proj} and~\ref{thm:PDE_proj} are the analogue of the relations sketched in Figure \ref{fig:one}.
\end{rem}
{\begin{rem}Theorems~\ref{thm:sol_proj} and~\ref{thm:PDE_proj} can be extended to incomplete array data by allowing a different orthogonal projector for each source index $j$.
\end{rem}}
\subsection{Towards a convex formulation}\label{sec:convex}
So far, the Riesz representations $\{p_i\}_{i=1}^n$ followed from the definition of $\{\mathcal{P}_i\}_{i=1}^n$ and $\mathsf{U}$. Alternatively, we can start from $\{p_i\}_{i=1}^n$ and consider a \emph{finite-dimensional} space $\mathsf{U} = \mathsf{P}_n = \text{span}\{p_i\}_{i=1}^n$. That is, the underlying solution space are precisely spanned by the Riesz representations of the source/measurement operators. In this setup, the objective $J_0$ then simplifies significantly, as is shown in the following Corollary.
\begin{cor}
\label{thm:quadratic}
Suppose $\mathsf{U} = \text{span}\{p_i\}_{i=1}^n$, where $\{p_i\}_{i=1}^n$ are real-valued, sufficiently regular and linearly independent functions. Then it holds thats
\begin{equation}
J_{0}(c) =  {\textstyle\frac{1}{2}}\text{trace}\left[\left(M - A(c)M^{-1}D\right)^*M^{-1}\left(M - A(c)M^{-1}D\right)\right],
\end{equation}
with $M_{ij} = \langle p_i, p_j \rangle_{\mathsf{U}}$ and $A(c)_{ij} = \overline{\mathcal{A}_c(p_j, p_i)}$.
\end{cor}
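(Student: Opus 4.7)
The plan is to exploit the finite-dimensionality of $\mathsf{U} = \text{span}\{p_i\}_{i=1}^n$: the forward states $u_j$, the adjoint states $w_i$, and, for noiseless data, the true states $\check{u}_j$ all lie in $\mathsf{U}$ and can be expanded in the basis $\{p_k\}_{k=1}^n$. This reduces every object appearing in $J_0(c) = \tfrac{1}{2}\text{trace}[E(c)^* G(c)^{-1} E(c)]$ (from~\eqref{eq:J0}) to finite matrices built from $M$, $A(c)$, and $A(\check{c})$, and the claimed identity then drops out by algebraic manipulation.

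First, I would set up Galerkin coefficient matrices. Writing $u_j = \sum_k U_{kj}\,p_k$ and testing the weak form $\mathcal{A}_c(u_j,\phi) = \mathcal{P}_j(\phi)$ against $\phi = p_i$ yields, using $A(c)_{ik} = \overline{\mathcal{A}_c(p_k,p_i)}$ and the real-symmetry of $M$, the system $\overline{A(c)}\,U = M$, so $\overline{U} = A(c)^{-1}M$. An analogous derivation applied to the adjoint equation~\eqref{eq:weakadjoint} with $w_i = \sum_k W_{ki}\,p_k$ gives $W = A(c)^{-T}M$. Since the true state $\check{u}_j$ also lies in $\mathsf{U}$, repeating the forward derivation with $\check{c}$ yields $D_{ij} = \mathcal{P}_i(\check{u}_j) = (MA(\check{c})^{-1}M)_{ij}$, i.e.\ $D = MA(\check{c})^{-1}M$. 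The same formula with the current coefficient $c$ gives $\mathcal{P}_i(u_j) = (MA(c)^{-1}M)_{ij}$, so $E(c) = D - MA(c)^{-1}M = M\bigl(A(\check{c})^{-1} - A(c)^{-1}\bigr)M$, which I would rewrite in the factored form $E(c) = -MA(c)^{-1}\bigl(M - A(c)M^{-1}D\bigr)$. Expanding $G_{ij}(c) = \langle w_i, w_j\rangle_{\mathsf{U}}$ in the basis $\{p_k\}$ gives $G(c) = W^T M \overline{W} = MA(c)^{-1}MA(c)^{-*}M$, hence $G(c)^{-1} = M^{-1}A(c)^* M^{-1} A(c) M^{-1}$.

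The final step is a direct substitution: the central block $A(c)^{-*}M\cdot G(c)^{-1}\cdot MA(c)^{-1}$ telescopes to $M^{-1}$, so $E(c)^* G(c)^{-1} E(c) = (M - A(c)M^{-1}D)^* M^{-1}(M - A(c)M^{-1}D)$, and taking $\tfrac{1}{2}$ of the trace yields the corollary. The main obstacle I expect is careful bookkeeping of conjugations and transposes: the sesquilinearity of $\mathcal{A}_c$ in its second argument together with the conjugate-linearity built into the Riesz map forces one to distinguish $(\cdot)^T$ and $(\cdot)^*$ at every step, and miscounting a single complex conjugate in the Galerkin systems for $U$ or $W$ would destroy the final cancellation. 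Everything else is routine, provided one invokes invertibility of $M$ (from linear independence of $\{p_i\}$) and of $A(c)$ (from the well-posedness assumption on the weak form).
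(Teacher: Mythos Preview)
Your proposal is correct and follows essentially the same Galerkin approach as the paper: expand $u_j$ and $w_i$ in the basis $\{p_k\}$, derive $E(c) = D - MA(c)^{-1}M$ and $G(c) = MA(c)^{-1}MA(c)^{-*}M$, and substitute into $J_0(c) = \tfrac{1}{2}\text{trace}[E^*G^{-1}E]$. The only superfluous step is your detour through the noiseless-data identity $D = MA(\check{c})^{-1}M$; the corollary does not assume noiseless data, and your factorization $E(c) = -MA(c)^{-1}(M - A(c)M^{-1}D)$ is a purely algebraic consequence of $E(c) = D - MA(c)^{-1}M$, so you can (and should) drop the reference to $\check{c}$ entirely.
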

\begin{proof}
Following the Galerkin approach, for each $j$, we let
\[
u_j = \sum_{k=1}^n U_{kj}\,p_k, 
\]
where the coefficients $U_{kj} \in \mathbb{C}$ are solved from
\[
\sum_{k=1}^n \mathcal{A}_c(p_k, p_i)U_{kj} = \langle p_j, p_i\rangle_\mathsf{U} \quad \text{for $i=1, 2, \ldots, n$}.
\]
The $n$-by-$n$ matrix $U$ has the $kj$-entry $U_{kj}$, $1\leq k,j \leq n$.

Note that this assumes that the functions $\{p_i\}_{i=1}^n$ are sufficiently regular to be considered as the test functions in $\mathcal{A}_c(p_k, p_i)$.
Similarly, assume the adjoint equation solution corresponding to the $j$-th source term is $\sum_{k=1}^n{W_{kj}} p_k $, $1\leq j \leq n$. Then coefficients  for the adjoint wavefields, $W_{kj}$, $1\leq k,j\leq n$, which gives rise to the matrix $W$, are solved from 
\[
\sum_{k=1}^n \overline{\mathcal{A}_c(p_i, p_k)}W_{kj} = \langle p_j, p_i\rangle_\mathsf{U} \quad \text{for $i=1, 2, \ldots, n$}.
\]
Then the data residual matrix $E(c)$ and the Gram matrix $G(c)$ have their $ij$-th entries to be
\[
e_{ij} = d_{ij} - \left\langle p_i, \sum_{k=1}^n U_{kj}p_k\right\rangle_{\mathsf{U}} = d_{ij} - \sum_{k=1}^n \langle p_i, p_k \rangle_\mathsf{U}\overline{U_{kj}},
\]
and
\[
g_{ij} = \left\langle \sum_{k=1}^n W_{ki}p_k\,,\, \sum_{\ell=1}^n W_{\ell j}p_\ell \right\rangle_\mathsf{U} = \sum_{k,\ell} W_{ki} \overline{W_{\ell j}} \langle p_k, p_\ell \rangle_\mathsf{U}.
\]
We introduce two more  $n$-by-$n$ matrices $M$ and $A$ 
 with entries $m_{ij} = \langle p_i, p_j \rangle_{\mathsf{U}}$, $a(c)_{ij} = \overline{\mathcal{A}_c(p_j, p_i)}$. Note that, by our assumptions, both matrices are invertible. Moreover, $M$ is Hermitian, i.e., $M = M^*$,  where $M^*$ denotes the Hermitian transpose of $M$. We now find
\[
E(c) = D - M\overline{U(c)} =  D - MA(c)^{-1}M,
\]
\[
G(c) = \overline{W(c)}^* M \overline{W(c)} = MA(c)^{-1}MA(c)^{-*}M\,.
\]
Substituting this in the result of Lemma~\ref{lma:rho_zero} we get the desired result.
\end{proof}

\begin{rem}\label{rem:quadratic}
If $\mathcal{A}_c$ is affine in $c$,  and the inner produce $\langle \cdot, \cdot \rangle_{\mathsf{U}}$ does not depend on $c$,  then $J_0(c)$ is \textit{quadratic} in $c$, which is ideal for many gradient-based optimization algorithms. However, this result does \emph{not} imply that $c$ is uniquely recoverable from the measurements, as this additionally would require $A(c_1) = A(c_2) \Rightarrow c_1 = c_2 \,, \forall \, c_1, c_2 \in \mathsf{C}$.  Moreover, we do not have any guarantees that the minimizer will coincide with the true coefficient as this choice for $\mathsf{U}$ may not yield a reasonable approximation of the underlying physics described by the PDE.
\end{rem}

\begin{rem}\label{rem:direct}
This result suggests a direct method for solving the inverse problem by setting up a system of $n^2$ (linear) equations of the form
\[
A(c) = MD^{-1}M,
\]
for a suitable parameterisation of $c$. If $\mathcal{A}_c$ is affine in $c$,  we only need to  solve a linear system with respect to $c$.
\end{rem}

\begin{rem}\label{rem:linear}
If $\mathcal{A}_c$  and $\langle \cdot, \cdot \rangle_{\mathsf{U}}$ are both affine in $c$,  in particular,  if $\langle p_i,  p_j\rangle_{\mathsf{U}} = \mathcal{A}_c (p_i, p_j)$ which implies that $M = A(c)$,  then  $J_0(c)$ can still be convex under suitable conditions.  %
\end{rem}

\section{Case studies}\label{sec:cases}
In this section we present a few case studies. The choice of $\mathsf{U}$ plays a central role. The two important aspects that we treat for each are \emph{i)} well-posedness of the weak form for the chosen space, and \emph{ii)} how to compute the Grammian matrix $G$ from the given measurements.

\subsection{1D Elliptic equation}
We consider a 1D elliptic PDE 
\[
-\left( c(x) u'(x)\right)' =  f(x), \quad x\in \Omega,
\]
with the boundary condition $u|_{\partial \Omega} = 0$. The corresponding forward (PDE) operator is denoted by 
\[\mathcal{L}(c) : H^{1}_0(\Omega) \longrightarrow H^{-1}(\Omega)\,.
\]
Here, $\Omega \subset \mathbb{R}$ is a bounded domain for the solution $u$, and $u|_{\partial \Omega}$ denotes its boundary. The short-hand notation for the PDE is $\mathcal{L}(c) u = f \in H^{-1}(\Omega)$. 

We further assume that the coefficient $c(x)$ is bounded both from above and below, i.e., $0 < a \leq c(x) \leq b < \infty$.  Therefore,  the PDE is uniformly elliptic.

\subsubsection{Weak formulation}
We consider real-valued functions, so $\mathcal{A}$ is bilinear and $\mathcal{P}$ is a linear operator:
$$\mathcal{A}(u,\phi) = \int_\Omega \phi(x) \mathcal{L}(c) u(x) \mathrm{d}x\,, $$
$$\mathcal{P}(u) = \int_{\Omega} f(x)u(x) \mathrm{d}x\,. $$
We let $\mathsf{U} = H_0^1(\Omega)$ with a weighted $H^{1}(\Omega)$-inner product:
\begin{equation}\label{eq:1D_E_U}
    \langle u, v\rangle_{\mathsf{U}} = \int_\Omega v(x) \mathcal{L}(c) u(x) \mathrm{d}x =  \int_{\Omega} c(x)u'(x)v'(x) \mathrm{d}x.
\end{equation}
The operator $\mathcal{P}$ is well-defined with $f \in H^{-1}(\Omega)$. We can view the Riesz map as $\mathcal{R}:H^{-1}(\Omega)\rightarrow H^{1}_0(\Omega)$, defined through solving the PDE, $ -  \left(c(x) u(x)'\right)' = f(x)$, with homogeneous Dirichlet boundary conditions. 

Let $\mathsf{U}$ be the space $H^{1}_0(\Omega)$ functions but equipped with the weighted inner product  given in~\eqref{eq:1D_E_U}, which induces the norm $\|\cdot \|_{\mathsf{U}} = \sqrt{\langle u, u \rangle_\mathsf{U}}$. Note that this is an equivalent norm to the standard $H^1$ norm since $  a \|\cdot \|_{H^1} \leq \|\cdot \|_{\mathsf{U}} \leq a^{-1} \|\cdot \|_{H^1}$ for some constant $a>0$ given by the Poincar\'e inequality.

\subsubsection{Data-driven kernel}
The Green's function for $\mathcal{L}(c)$, denoted by $\mathcal{G}(c)$, is a continuous symmetric positive definite kernel. Moreover, the Dirac delta function $\delta(x) \in H^{-1}(\Omega)$ since $\Omega \subseteq \mathbb{R}$. We define $\mathcal{P}_i (u) = u(x_i)$, which is equivalent to $f_i(x)  = \delta (x-x_i)$ for $x_i \in \Omega$. We denote by $u_j(x)$ the PDE solution with the source term $f_j$,  $j=1,\ldots, n$.   In this case, our measurements 
\[
d_{ij} = \mathcal{P}_i(u_j) = u_j (x_i), \quad 1\leq i,j \leq n.
\]

We can verify that $\left(H_0^1(\Omega), \|\cdot \|_{\mathsf{U}}\right)$ is a reproducing kernel Hilbert space (RKHS) with the reproducing kernel being $\mathcal{G}(c)$. Thus, $v_j = q_j$ in~\eqref{eq:eFWI2}, where $v_j$ is the PDE solution in $\left(H_0^1(\Omega), \|\cdot \|_{\mathsf{U}}\right)$. In this example, the source operators are Dirac delta functions,  belonging to $H^{-1}(\Omega)$, which is the dual space of $H_0^1(\Omega)$.  As a result, for any fixed $j = 1,\ldots,n$, we can reformulate~\eqref{eq:eFWI_inner} as
\[
\min_{q \in \left(H_0^1(\Omega), \,  \|\cdot \|_{\mathsf{U}}\right)}   {\textstyle\frac{1}{2}}\sum_{i=1}^n |q(x_i) - e_{ij}|^2 +  {\textstyle\frac{\rho}{2}} \|q\|_{\mathsf{U}}^2.
\]
The optimal solution $q_j$ is
\[
q_j(x) = \sum_{k=1}^n \alpha_{jk} \mathcal{G}(c) (x,x_k) = \sum_{k=1}^n \alpha_{jk} \, u_k(x) ,
\]
where $\boldsymbol{\alpha}_{j} = (G(c) + \rho I)^{-1} \boldsymbol{e}_{j}$. The $ij$-th entry of the kernel matrix $G(c)$ is  
$$
g_{ij} = \langle u_i, u_j\rangle_{\mathsf{U}} =  \int_\Omega u_i(x) \mathcal{L}(c) \mathcal{G}(c)(x,x_j)\mathrm{d}x =u_i(x_j) = d_{ji}\,. 
$$
Note that $g_{ij} = g_{ji}$, and the symmetry comes from the fact that $\langle u, v\rangle_{\mathsf{U}} = \langle v, u\rangle_{\mathsf{U}}$. That is, we can obtain the Gram matrix $G$ directly from the data.

\begin{rem}
   In this example of the 1D elliptic equation, our method coincides with the Representer Theorem in the theory of RKHS. However, the connection with RKHS is no longer true for higher-dimensional elliptic equation examples since $\delta(x)$ is not in $H^{-s}$ for $s \leq d/2$ where $d$ is the dimension.
\end{rem}
\subsection{2D Elliptic equation}
Consider the 2D diffusion equation on a compact domain $\overline{\Omega}$ with a variable coefficient
\begin{equation}\label{eq:2d_poisson}
- \nabla \cdot \left( c(x) \nabla u(x) \right) = f(x),\quad x\in \Omega
\end{equation}
with the zero Dirichlet boundary condition on $\partial {\Omega}$. 
We further  assume that $0 < a \leq c(x) \leq b < \infty$, which implies that the PDE is uniformly elliptic and well-posed for the set of variable coefficients that we are interested in.

\subsubsection{Weak formulation}
The weak formulation is to find $u(x) \in \mathsf{U}$ such that
\begin{equation}\label{eq:forward_weak}
\mathcal{A}_c(u, \phi) = \mathcal{P}(\phi)\quad \forall \phi \in \mathsf{U},
\end{equation}
with
\[
\mathcal{A}_c(u,v) = \int_\Omega c(x)\, \nabla u(x) \cdot \nabla v(x) \,\mathrm{d}x\,,
\]
\[
\mathcal{P}(u) = \int_\Omega f(x) u(x)\, \mathrm{d}x.
\]
The weak formulation of the adjoint problem is to find $w \in \mathsf{U}$ such that
\begin{equation}\label{eq:adjoint_weak}
\mathcal{A}_c(\phi, w ) = \mathcal{A}_c ( w,\phi) = \mathcal{P}(\phi)\quad \forall \phi \in \mathsf{U},
\end{equation}
with $\mathcal{A}_c$ and $\mathcal{P}$ as before. Since the given elliptic operator is essentially self-adjoint, the adjoint problem~\eqref{eq:adjoint_weak} is exactly the same as the forward problem~\eqref{eq:forward_weak}.

\subsubsection{Data-driven kernel}
Similar to the 1D case,  we can define the inner product $\langle u, v \rangle_\mathsf{U}  = \mathcal{A}_c(u,v)$ which is a weighted $\dot{H}^1$ inner product by the positive variable coefficient $c(x)$. Then the Hilbert space $\mathsf{U}$  is given by
\[
\mathsf{U} = \left\{ u \, \left|\, \mathcal{A}_c(u,u) < \infty,
\, u|_{\partial \Omega} = 0\right.\right\},
\]
and its dual space is also defined accordingly. Then the weak forms \eqref{eq:forward_weak} and \eqref{eq:adjoint_weak} are well-posed.

We consider a finite number of sources $\{f_i\}$, $i=1,\ldots, n$, and define $p_i = \mathcal{R} f_i \in \mathsf{U}$, $\mathcal{P}_i(u) = \int_\Omega f_i \, u \rd x $, $1\leq i\leq n$. The function $f_i$ takes the form
\[
f_i(x) = \exp{(-20|x-x_i|^2)},\quad x\in \mathbb{R}^2\,,
\]
where $x_i$ is the center of the Gaussian. We use $n = 40$ sources and their location are depicted in Figure~\ref{fig:centers}.
\begin{figure}
\centering
\includegraphics[width = 0.5\textwidth]{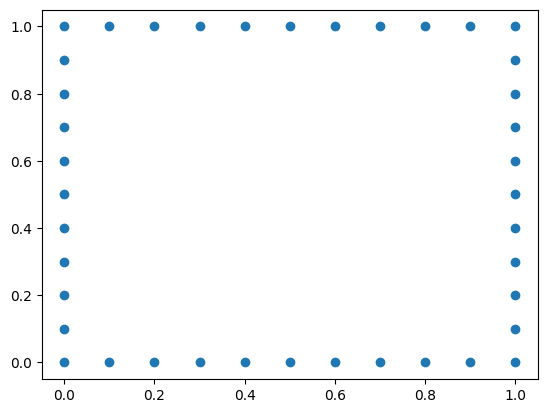}
    \caption{The location of the Gaussian centers for the direct method on the domain $[0,1]^2$.}\label{fig:centers}
\end{figure}
The corresponding PDE solution with the source $f_i$ is denoted by $u_i$. We first note that $w_i = u_i$ in~\eqref{eq:adjoint_weak} with $\mathcal{P}$ replaced by $\mathcal{P}_i$ since the PDE is self-adjoint. As a result, the Gram matrix is given by
\begin{equation}\label{eq:2d_poisson_kernel}
g_{ij}(k) = \langle u_i, u_j\rangle_{\mathsf{U}} = \int_\Omega c(x) \nabla u_i \cdot \nabla u_j  \,  \mathrm{d}x\ = \mathcal{A}_c(u_i,u_j) = \mathcal{P}_j(u_i) = d_{ji},\quad \forall i,j = 1,\ldots, n\,.
\end{equation}
That is, if $\{\mathcal{P}_i\}$ act as both the sources and the receivers, we can obtain the kernel matrix $G$ for the true coefficient through data measurements.

\subsubsection{Numerical example}
Next, we use an example to illustrate the impact of $\rho$ as well as the choice of inner product $\langle \cdot, \cdot \rangle_{\mathsf{U}}$ in the optimization landscape of the objective function $J$ in~\eqref{eq:new_obj} with respect to the coefficient $c$. Here, the domain $\Omega$ is a unit square $[0,1]^2$.

We consider a parameterized variable coefficient 
\[
c(x_1,x_2;\theta) = |\sin x_1|^2 + |\sin x_2|^2+ (1+100\,\theta)|\sin(10\,x_1)|^2 + |\sin(10\,x_2)|^2\,,\quad [x_1,x_2]^\top \in \Omega\,,
\]
where $\theta \in [0,2]$, the range of possible parameter values and the ground truth is $\theta^* = 0$. Note that $c$ is linear with respect to the parameterization $\theta$. Thus, the convexity of $J$ with respect to $c$ is represented here by the convexity of $J$ with respect to $\theta$.

First, we assume a finite-dimensional space $\mathsf{U} = \text{span}\{p_i\}_{i=1}^n$. That is, the solution space is a linear combination of the Riesz representation of the source terms $\{f_i\}$. The sources are located near the boundary of the domain.
In this setup, we consider two different inner products: a variable coefficient-independent one $\langle u, v\rangle_{\mathsf{U}} = \int_\Omega \nabla u \cdot \nabla v  \,  \mathrm{d}x$ and a variable coefficient-dependent inner product $\langle u, v\rangle_{\mathsf{U}} = \int_\Omega c(x) \nabla u \cdot \nabla v  \,  \mathrm{d}x = \mathcal{A}_c(u,v)$, for any $u, v $ in the Hilbert space $\mathsf{U}$. For the former inner product, the Riesz map $\mathcal{R}=-\Delta^{-1}$, while for the latter, $\mathcal{R}=\left(-\nabla\cdot (c(x) \nabla )\right)^{-1}$, both having the zero Dirichlet boundary condition.

We remark that, for the variable coefficient-dependent case,  the data-driven kernel formula~\eqref{eq:2d_poisson_kernel} still applies even though $\mathsf{U}$ here is an uncommon choice of finite-dimensional solution space.  We calculate the objective function $J(c)$ defined in~\eqref{eq:new_obj} for these two inner products, and examine the optimization landscape with respect to different values of $\rho$. Results are shown in Figure~\ref{fig:Poisson2D_source_basis} for these two different inner products.

Note that this example satisfies the assumptions in~\Cref{thm:quadratic}, so the statements apply. Based on Remark~\ref{rem:quadratic}, when the inner product does not depend on $c$ and $\rho = 0$, the objective function is quadratic with respect to the $c$, which can be seen in~\Cref{fig:Poisson2D_source_basis}(A). On the other hand, when $\langle \cdot, \cdot \rangle_{\mathsf{U}} = \mathcal{A}_c(\cdot, \cdot)$, $J_0$ is linear in $c$ as addressed in Remark~\ref{rem:linear} and illustrated in~\Cref{fig:Poisson2D_source_basis}(B). Moreover, with the $c$-independent inner product, the classic PDE-constrained optimization is known to be non-convex, corresponding to the case $\rho = \infty$ in~\Cref{fig:Poisson2D_source_basis}(A). Interestingly, this is no longer true if we use the $c(x)$-dependent inner product, which yields a convex objective function with respect to the coefficient $c(x)$. %

\begin{figure}[ht!]
    \centering
    \subfloat[$\langle \cdot, \cdot \rangle_\mathsf{U}$ without $c(x)$-dependency]{\includegraphics[width = 0.5\textwidth]{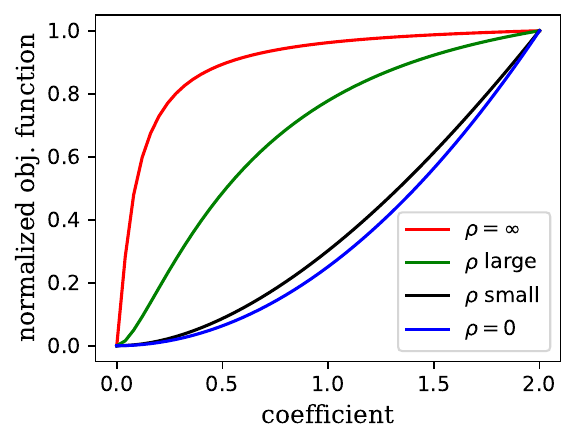}}
        \subfloat[$\langle \cdot, \cdot \rangle_\mathsf{U}$ with $c(x)$-dependency]{\includegraphics[width = 0.5\textwidth]{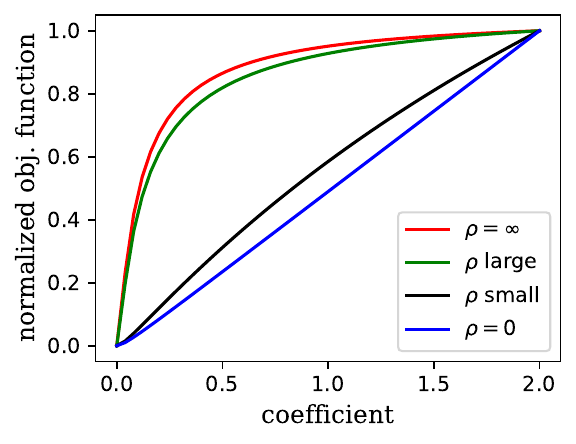}}
    \caption{The optimization landscape for PDE-constrained optimization based on the 2D inverse conductivity problem, for which the forward problem is the Poisson equation with a variable coefficient~\eqref{eq:2d_poisson}. The Hilbert space $\mathsf{U} = \text{span}\{p_i\}_{i=1}^n$. In (B), the inner product is given by $\langle u, v\rangle_{\mathsf{U}} = \int_\Omega \nabla u \cdot \nabla v  \,  \mathrm{d}x$ while the inner product in (B) is given by  $\langle u, v\rangle_{\mathsf{U}} = \int_\Omega c(x) \nabla u \cdot \nabla v  \,  \mathrm{d}x$.}\label{fig:Poisson2D_source_basis}
\end{figure}

Next, we repeat the example above except that the Hilbert space $\mathsf{U}$ is now  spanned by 2D first-order Lagrange elements over the entire domain $[0,1]^2$, a common choice in a low-order  finite-element method (FEM). Figure~\ref{fig:Poisson2D_full_basis} shows the optimization landscapes corresponding to various values of $\rho \in [0,\infty)$ and the two inner products. When $\rho =\infty$, the problem reduces to the classic PDE-constrained optimization based on the 2D Poisson equation with the squared $L^2$ norm to measure the data misfit. Since the measured solution is nonlinear in $c$, the resulting objective function is non-convex, as seen in Figure~\ref{fig:Poisson2D_full_basis}. As we gradually reduce $\rho$ from $\infty$ to $0$, the optimization landscape varies and eventually becomes a linear function for the $c$-dependent inner product, and a quadratic function for the $c$-independent inner product, both are advantageous for finding the global minimum $\theta^* = 0$. The  convex landscapes at $\rho = 0$ align well with those in Figure~\ref{fig:Poisson2D_source_basis} even though our theory, i.e., Corollary~\ref{thm:quadratic}, currently does not apply to this setup.

In both figures, we observe the gradual variance from the case $\rho = 0$ to the other limit $\rho = \infty$. These plots show the interpolative nature of the proposed norm~\eqref{eq:new_obj} for measuring the data misfit and the effectiveness in improving the optimization landscape when we incorporate the model and parameter properties into the objective function.

\begin{figure}[ht!]
    \centering
    \subfloat[$\langle \cdot, \cdot \rangle_\mathsf{U}$ without $c(x)$-dependency]{\includegraphics[width = 0.5\textwidth]{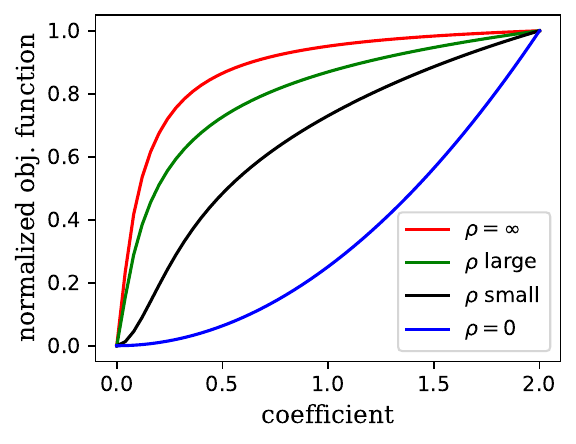}}
    \subfloat[$\langle \cdot, \cdot \rangle_\mathsf{U}$ with $c(x)$-dependency]{\includegraphics[width = 0.5\textwidth]{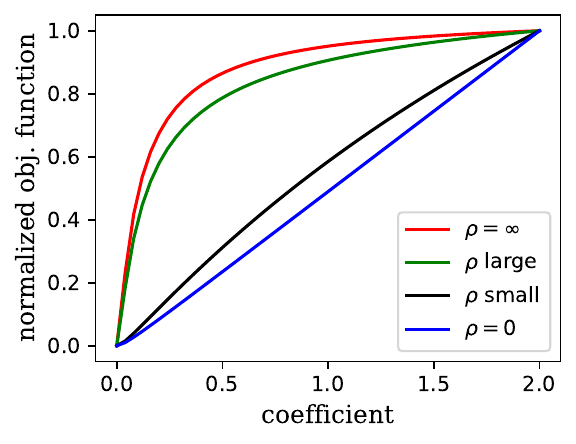}}
    \caption{PDE-constrained optimization based on the 2D inverse conductivity problem, for which the forward problem is the Poisson equation with variable coefficient~\eqref{eq:2d_poisson}. The Hilbert space $\mathsf{U}$ is spanned by 2D first-order Lagrange elements over the domain $[0,1]^2$.  In (A), the inner product is given by  $\langle u, v\rangle_{\mathsf{U}} = \int_\Omega \nabla u \cdot \nabla v  \,  \mathrm{d}x$. In (B), the inner product is given by $\langle u, v\rangle_{\mathsf{U}} = \int_\Omega c(x) \nabla u \cdot \nabla v  \,  \mathrm{d}x$. }\label{fig:Poisson2D_full_basis}
\end{figure}

\subsection{1D Helmholtz equation} Our next example is the 1D Helmholtz equation
\[
-u''(x;k) - (k / c(x))^2 u(x;k) = f(x),
\]
with boundary conditions
\[
u(0;k) = 0,
\]
\[
u'(1;k) - \imath \left(k/c(1)\right) u(1;k) = 0.
\]
We furthermore assume that $0 < a \leq c(x) \leq b < \infty$.

\subsubsection{Weak formulation}
The weak formulation is to find $u(\cdot\,; \,  k) \in \mathsf{U}$ such that
\begin{equation}\label{eq:weak_helm}
\mathcal{A}_{c,k}(u(\cdot\,;\, k), \phi) = \mathcal{P}(\phi)\quad \forall \phi \in \mathsf{U},
\end{equation}
with the bilinear form defined by
\[
\mathcal{A}_{c,k}(u,v) = \int_0^1 u'(x) \overline{v'(x)}\mathrm{d}x - k^2 \int_0^1 c(x)^{-2}u(x) \overline{v(x)}\mathrm{d}x -\imath k c(1)^{-1}u(1)\overline{v(1)},
\]
\[
\mathcal{P}(\phi) = \int_0^1 f(x)\overline{\phi(x)}\mathrm{d}x.
\]
The weak formulation of the adjoint problem is to find $w(\cdot \,;\, k) \in \mathsf{U}$ such that
\begin{equation}\label{eq:weakadjoint_helm}
\overline{\mathcal{A}_{c,k}(\phi, w(\cdot \,;\, k))} = \mathcal{P}(\phi)\quad \forall \phi \in \mathsf{U},
\end{equation}
with $\mathcal{A}_{c,k}$ and $\mathcal{P}$ as before. 

We define the inner product as $\langle u, v \rangle_\mathsf{U} = \int_0^1 u'(x)\overline{v'(x)}\mathrm{d}x$ and define the Hilbert space $\mathsf{U}$  as
\[
\mathsf{U} = \left\{ u : [0,1]\rightarrow\mathbb{C} \, \left|\, \|u\|_{\mathsf{U}} < \infty, u(0) = 0\right.\right\}.
\]
We then define its dual in the usual way. The the weak forms \eqref{eq:weak_helm} and \eqref{eq:weakadjoint_helm} are well-posed for $f \in H^{-1}$ \cite{Ihlenburg1997}.

The measurements are defined correspondingly by 
\[
d_{ij}(k) = \mathcal{P}_i(u_j(\cdot \,;\, k)) = \int_0^1 f_i(x) \overline{u_j(x;k)}  \rd x,
\]
where $i,j = 1,\ldots,n$, and $u_j(\cdot;k)$ is the solution to~\eqref{eq:weak_helm} with the source term $f_j$. Moreover, we assume $\{f_j\}_{j=1}^n$ are real-valued functions. Correspondingly, $w_j(\cdot;k)$ is the solution to~\eqref{eq:weakadjoint_helm} with the source term $f_j$, $j=1,\ldots,n$.

\subsubsection{Data-driven kernel}
We first note that $w_i(x;k) = \overline{u_i(x;k)}$ based on the properties of~\eqref{eq:weak_helm} and~\eqref{eq:weakadjoint_helm} (see~Lemma \ref{lma:hlmholtz1}), so  the Grammian $G(k) \in \mathbb{C}^{n\times n}$ (cf.~\Cref{thm:Representer}) has elements
\[
g_{ij}(k) = \langle u_j(\cdot;k), u_i(\cdot;k)\rangle_{\mathsf{U}}.
\]
We then find the following expression for the Grammian in terms of the data $d_{ij}(k)$ and the values of $b_i(k) = u_i(1;k)$ (see Lemma \ref{lma:hlmholtz2}):
\begin{equation}
\label{eq:data_gram_helm}
g_{ij}(k) = \Re\left(d_{ij}(k) + {\textstyle\frac{k}{2}}d_{ij}'(k)\right) + {\textstyle\frac{\imath k^2}{2c(1)}}\left(\overline{b_i'(k)}b_j(k) - \overline{b_i(k)}b_j'(k)\right),
\end{equation}
where $d_{ij}'$ and $b_i'$ denote the derivatives of $d_{ij}$ and $b_i$ with respect to $k$. Thus, we can compute the Grammian from the measurements if we have access to the response of the sources at $x=1$ and can compute the derivatives of the measurements with respect to $k$.

\subsubsection{Numerical example}
We consider a constant sound speed ($c(x)\equiv c$) and $\mathcal{P}_iu = u(x_i)$ (i.e., $f_i(x) = \delta(x-x_i)$). The solution in that case is given by the analytic formula 
\[
u_i(x; k) = 
\begin{cases} 
(k/c)^{-1}\sin((k/c) x)e^{\imath (k/c) x_i}
& x \leq x_i, \\
(k/c)^{-1}\sin((k/c) x_i)e^{\imath (k/c) x}
& x > x_i. \\
\end{cases} 
\]
This allows us to compute the Grammian $G$ in closed form and plot the objective functions resulting from the different formulations:
\begin{align}
J_\infty(c) &= {\textstyle\frac{1}{2}}\text{trace}\left(E(c)^*E(c)\right),\label{conventional}\\
J_\rho(c) &= {\textstyle\frac{1}{2}}\text{trace}\left(E(c)^*\left(I + \rho^{-1}G(c)\right)^{-1}E(c)\right),\label{variable metric}\\
\widetilde{J}_\rho(c) &= {\textstyle\frac{1}{2}}\text{trace}\left(E(c)^*\left(I + \rho^{-1}\widetilde{G}\right)^{-1}E(c)\right).\label{data-driven metric}
\end{align}
In this notation, the wavenumber $k$ is implicit and fixed. The fundamental difference between~\eqref{variable metric} and~\eqref{data-driven metric} is that the former uses a parameter-dependent Grammian, which we denote by $G(c)$ in a slight abuse of notation, while the latter uses the data-driven Grammian defined in~\eqref{eq:data_gram_helm}, which here we denote by $\widetilde{G}$. 

In this experiment, we take measurements at $x_i = i / (n+1)$ for $i = 1, \ldots, n$ and generate (noiseless) data for $c = 1$. The behavior of the three objectives for various values of $(n, k, \rho)$ is shown in Figures \ref{fig:helmholtz1da}-\ref{fig:helmholtz1dc}.

We highlight a few interesting observations. First, the optimization problem generally gets more challenging (with more local minima) for larger $k$ (higher frequencies). This is expected and well-known behavior for inverse problems with the Helmholtz equation. Second, the relaxed approach for small $\rho$ makes the problem easier to solve (i.e., with fewer local minima, which is advantageous for gradient-based optimization algorithms) when more measurements are available (following Theorem \ref{thm:quadratic}). Third, the data-driven metric formulated in~\eqref{data-driven metric} is a good alternative to the variable metric approach given in~\eqref{variable metric} in these examples. Note that we do not need to differentiate the Grammian using the data-driven metric, but only the data misfit $E(c)$, which can be computed using the adjoint-state method. Therefore, using any gradient-based optimization algorithms, such as steepest descent and nonlinear conjugate gradient methods, the objective function \eqref{data-driven metric} incurs the same computational cost compared to the objective function~\eqref{conventional}. %
Finally, we note that for $n=10$ we observe the convex behavior suggested by section \ref{sec:convex} as only in the limit of large $n$ does the finite-dimensional space yield a sufficiently accurate representation of the solution of the PDE.
\begin{figure}
\includegraphics[scale=.5]{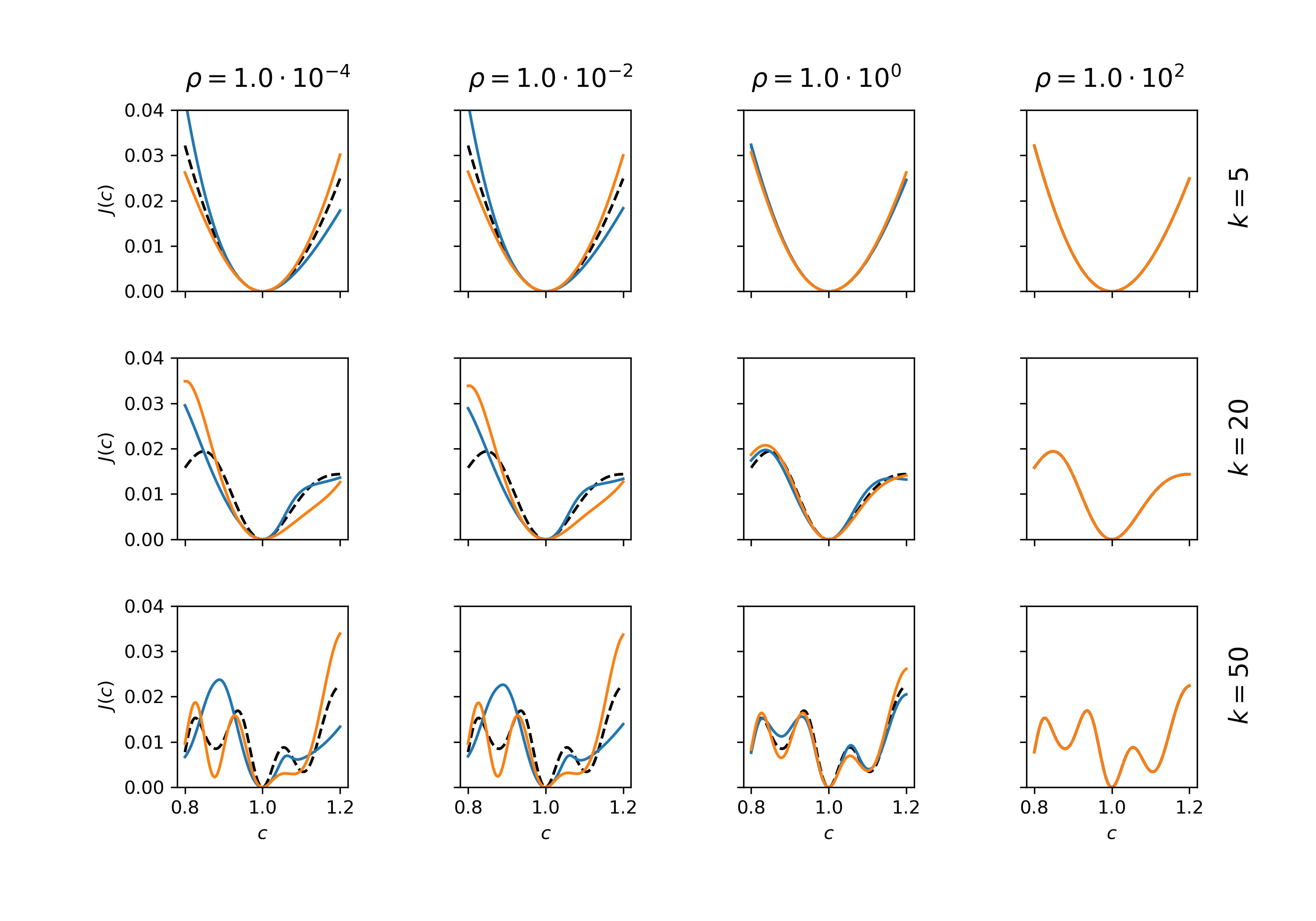}
\caption{Helmholtz example: different objective functions for $n=2$; conventional (dashed), variable metric (blue), data-drive metric (orange).}\label{fig:helmholtz1da}
\end{figure}

\begin{figure}
\includegraphics[scale=.5]{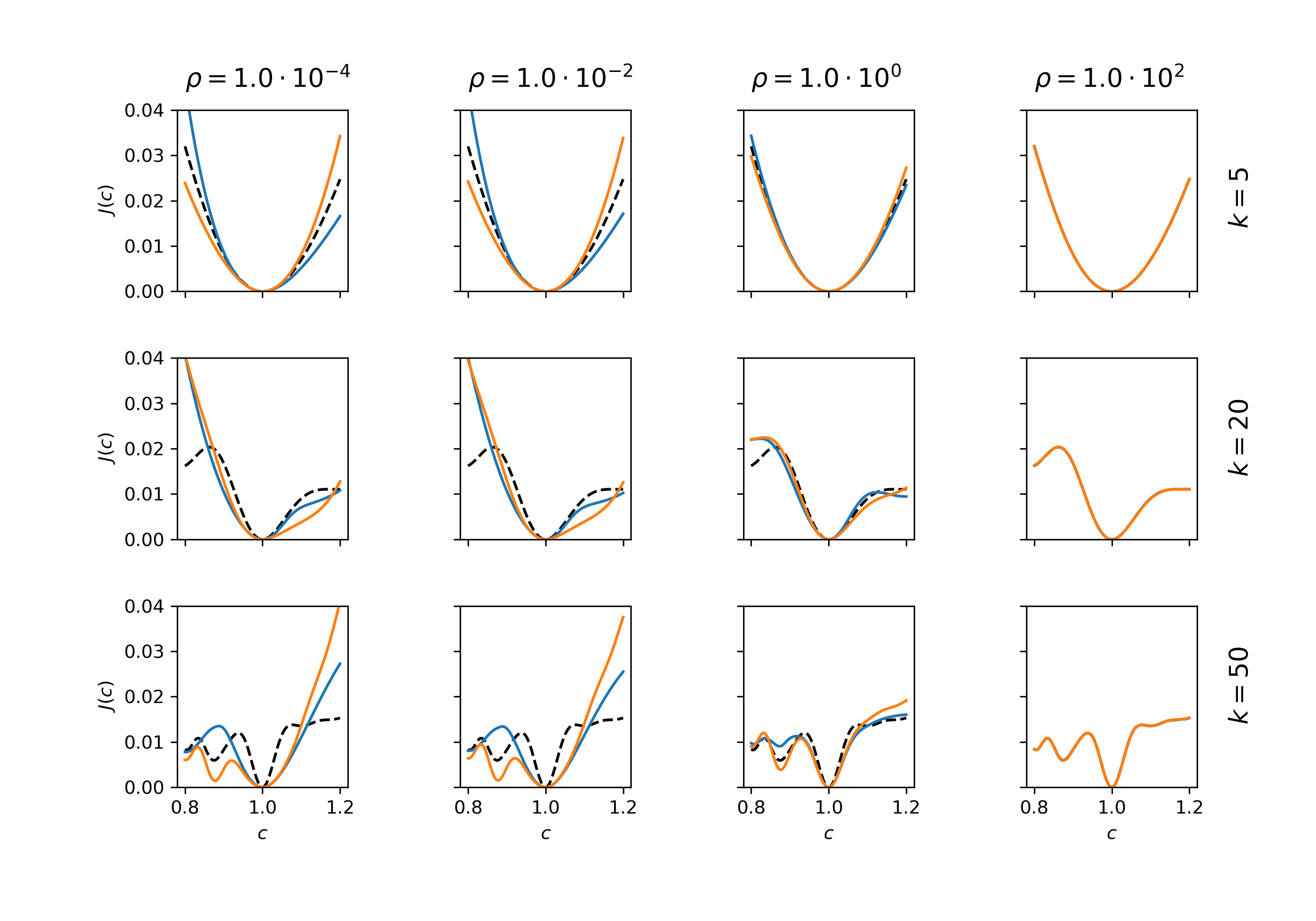}
\caption{Helmholtz example: different objective functions for $n=5$; conventional (dashed), variable metric (blue), data-drive metric (orange).}\label{fig:helmholtz1db}
\end{figure}

\begin{figure}
\includegraphics[scale=.5]{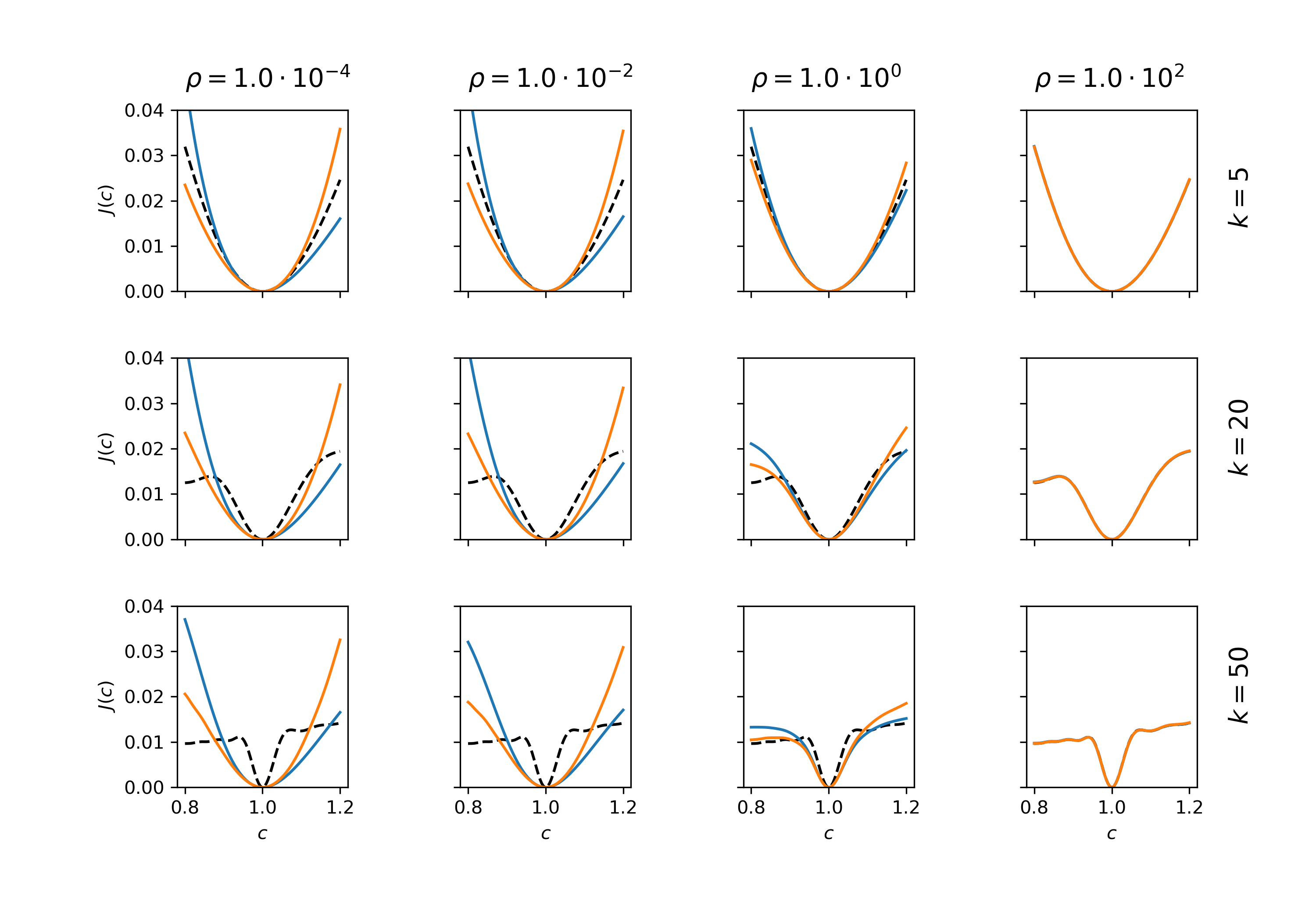}
\caption{Helmholtz example: different objective functions for $n=10$; conventional (dashed), variable metric (blue), data-drive metric (orange).}\label{fig:helmholtz1dc}
\end{figure}
\subsection{Schr{\"o}dinger equation}
We consider the following Schr{\"o}dinger equation
\begin{equation}\label{eq:sch_eqn}
-\nabla^2 u(x;\lambda) + c(x)u(x;\lambda) - \lambda u(x;\lambda) = f(x), \quad x \in \Omega\subset{\mathbb{R}^d},
\end{equation}
with homogeneous Dirichlet boundary conditions. The inverse problem of estimating the scattering potential $c(x)$ from measurements of $u$ for various source term $f(x)$ and different values of $\lambda$ is often studied in the context of inverse scattering \cite{imanuvilov2012inverse,Novikov2022}.

\subsubsection{Weak formulation}
To guarantee the well-posedness of the forward problem, it is natural to consider the function space $\mathsf{U} = H_0^1(\Omega)$, so the forward problem well-posed for $f_i \in H^{-1}(\Omega)$~\cite{Victor2014}.

We first define the bilinear form
$$\mathcal{A}_{c,\lambda}(u,v)=\int_{\Omega} \nabla u(x)\cdot \nabla v(x)\mathrm{d}x + \int_{\Omega} c(x)u(x)v(x)\mathrm{d}x - \lambda \int_{\Omega} u(x)v(x)\mathrm{d}x,$$
$$\mathcal{P}(u) = \int_{\Omega} f(x)u(x)\mathrm{d}x.$$
We consider $n$ source functions $\{f_i\}_{i=1}^n \subset \U^*$, whose Riesz representations are $\{p_i\}_{i=1}^n \subset \mathsf{U}$. 
The measurements are then denoted by
\[
d_{ij}(\lambda) = \mathcal{P}_i(u_j(\cdot\,;\,\lambda)) = \langle p_i, u_j(\cdot \,;\,\lambda)\rangle_\mathsf{U},
\]
where $u_i(\cdot\,;\,\lambda)$ is the (weak) solution of the Schr{\"o}dinger equation for frequency $\lambda$ and source term $f_i$. Due to symmetry we immediately find $d_{ij}(\lambda)=d_{ji}(\lambda)$, $i,j=1,\ldots,n$.  
We will first consider this choice with a $c(x)$-weighted inner product to be able to derive a data-driven Gram matrix $G$. Alternatively, we can also use the usual (unweighted) $H^1_0(\Omega)$ inner product and consider a finite-dimensional subspace spanned by the Riesz representations $\{p_i\}_{i=1}^n \subset H_0^1(\Omega)$ of $f_i \in H^{-1}(\Omega)$ to derive a direct inversion method according to Theorem~\ref{thm:quadratic}.

\subsubsection{Data-driven kernel}
To derive an expression that relates the kernel to the measurements, consider the weighted inner product:
$$\langle u,v\rangle_{\mathsf{U}} = \int_{\Omega} \nabla u(x)\cdot\nabla v(x)\mathrm{d}x + \int_{\Omega} c(x)u(x)v(x)\mathrm{d}x.$$
Since the equation~\eqref{eq:sch_eqn} is self-adjoint, we get $w_i = u_i$ for $i=1,\ldots,n$, and hence the $ij$-th entry of the Gram matrix is given by
$$g_{ij}(\lambda) = \langle u_i(\cdot \, ;\, \lambda), u_j(\cdot\,;\,\lambda)\rangle_\mathsf{U}.$$
We will use the short-hand notation $u_i(\lambda) = u_i(\cdot \, ;\, \lambda)$. From the weak form, we find that 
$$\langle u_i(\lambda), u_j(\mu)\rangle_{\mathsf{U}} - \lambda\langle u_i(\lambda),u_j(\mu)\rangle_{L^2(\Omega)} = d_{ij}(\mu),$$
$$\langle u_i(\lambda), u_j(\mu)\rangle_{\mathsf{U}} - \mu\langle u_i(\lambda),u_j(\mu)\rangle_{L^2(\Omega)} = d_{ij}(\lambda).$$
Using these two relations, we can rewrite the $ij$-th entry of the Gram matrix as
$$G_{ij}(\lambda)=\langle u_i(\lambda), u_j(\lambda)\rangle_{\mathsf{U}} = \lim_{\mu\rightarrow\lambda}\frac{\mu d_{ij}(\mu) - \lambda d_{ij}(\lambda)}{\mu-\lambda}.$$
This simplifies to
$$
g_{ij}(\lambda) = \lim_{h\rightarrow 0}\frac{(\lambda + h)d_{ij}(\lambda+h) - \lambda d_{ij}(\lambda)}{h} = d_{ij}(\lambda)+\lambda d_{ij}'(\lambda).
$$
To calculate $G_{ij}(\lambda)$,  we thus need to measure the derivative of the measurements with respect to the hyperparameter $\lambda$ as well, which can only be computed when we have measurements with densely varying $\lambda$ so that a divided difference approximation yields a reasonably good result. %

\subsubsection{A direct method}
To illustrate the effectiveness of the direct method in Remark~\ref{rem:direct}, we consider the $c$-independent inner product: $\langle u, v\rangle_{\mathsf{U}} = \int_{\Omega} \nabla u(x)\cdot\nabla v(x) \mathrm{d}x$ and define $p_i$ as the solution of $- \nabla^2 p_i = f_i$ with the zero Dirichlet boundary conditions. That is, $p_i$ is the Riesz representation of $f_i$ given this inner product. Moreover, we assume that $\mathsf{U} = \text{span}\{p_1,\ldots, p_n\}$, and the domain $\Omega = [0,1]^2 \subset \mathbb{R}^2$.

\begin{figure}
    \centering
    \subfloat[The true coefficient]{\includegraphics[width = 0.49\textwidth]{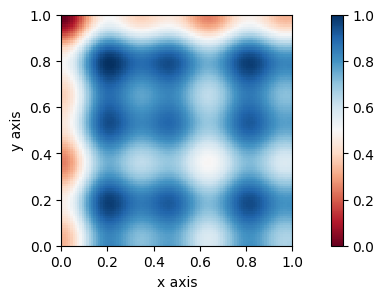}}
    \subfloat[Inversion with  $a = 1$]{\includegraphics[width = 0.49\textwidth]{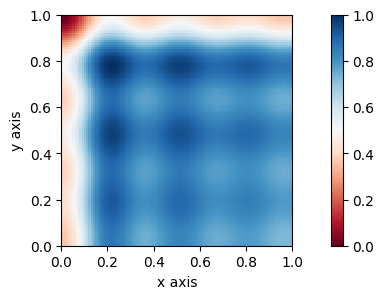}}\\
    \subfloat[Inversion with  $a=3$]{\includegraphics[width = 0.49\textwidth]{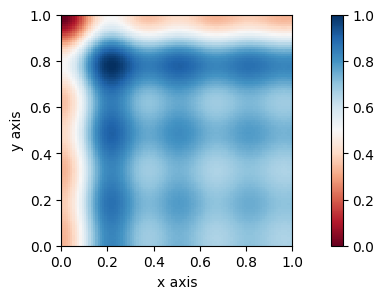}}
    \subfloat[Inversion with $a = 10$]{\includegraphics[width = 0.49\textwidth]{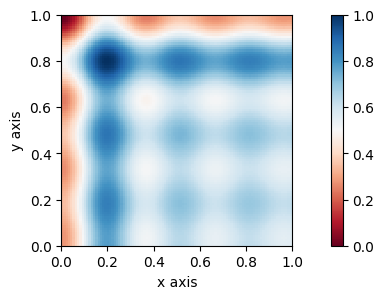}}
    \caption{A direct method~\eqref{eq:direct} to reconstruct the variable coefficient for the linear Schr\"odingr equation. (A): the true coefficient $c(x)$; (B)-(D): the direct reconstruction using the Gaussian sources of different parameters,  ranging from $a=1$ (B), $a=3$ (C) to $a=10$ (D).}
    \label{fig:direct_method}
\end{figure}

We can now apply the approach suggested by Corollary~\ref{thm:quadratic} and Remark~\ref{rem:direct}. Expressing the scattering potential $c(x)$ in terms of a basis $\{\psi_k\}_{k=1}^{n'}$, i.e., $c(x) = \sum_{k=1}^{n'} c_k \psi_k(x)$, we get the following set of $n^2$ linear equations for its coefficients $\{c_k\}_{k=1}^{n'}$:
\begin{equation}\label{eq:direct}
\sum_{k=1}^{n'} h_{ijk} \, c_k = -m_{ij} + \lambda s_{ij} + \left(MD(\lambda)^{-1}M\right)_{ij}, \, i,j = 1, 2, \ldots, n,
\end{equation}
with
\[
h_{ijk} = \langle \psi_k p_i\,, \,p_j\rangle_{L^2(\Omega)}\,, \quad m_{ij} = \langle p_i\,, \,p_j\rangle_{\mathsf{U}}, \quad s_{ij} = \langle p_i\,, \, p_j\rangle_{L^2(\Omega)}.
\]
Note that if $n^2 \gg n'$, the linear system is severely over-determined. We may use a subset of the $n^2$ equations instead.

The basis function for the variable coefficient $c(x)$, $\{\psi_k\}$, is set to be
\[
\psi_k(x) = |\sin ( k x )|^2, \quad x\in \mathbb{R}^2\,,\quad k = 1,2,\ldots, 10\,.
\]
The reference coefficients $\{c_k\}_{k=1}^{10}$ are randomly drawn. In Figure~\ref{fig:direct_method}(A), we present the groudtruth for reference. We set up the tests such that the source function $f_i$ is a Gaussian centered at location $x_i$ with different width $a>0$:
\[
f_i(x) = \exp{(-a|x-x_i|^2)},\quad x\in \mathbb{R}^2\,,\quad i = 1,\ldots,40\,.
\]
The location of the Gaussian centers $\{x_i\}_{i=1}^{40}$ is fixed for all sets of basis; see Figure~\ref{fig:centers} for an illustration. The basis functions $\{p_i\}_{i=1}^{40}$ are the Riesz representation of $f_i$ in $\mathsf{U}$ with respect to the $\dot{H}^1$ inner product.

In Figure~\ref{fig:direct_method}(B), (C), and (D), we use $40$ basis functions, but with the Gaussian parameter $a$ varying from $1$, $3$ to $10$, respectively. Since $n = 40$ and $m = 10$, the problem is very over-determined. We fix $j=1$, and consider $i=1,\ldots, 10$ in~\eqref{eq:direct}. Thus, the linear system to be solved is fully determined. The set of basis functions with the largest coverage yields the best result (see Figure~\ref{fig:direct_method}(B)), and the inversion worsens as the variance of the Gaussian basis in defining the $\{f_i\}$ decreases. This is to be expected since the larger the variance of the $\{f_i\}$ located on the boundary, the more overlap between their Riesz representations $\{p_i\}$ and the variable coefficient $c(x)$ defined on the interior of the domain. 

\subsection{Seismic inversion with 2D Helmholtz equation}

Our final example is seismic inversion constrained by the 2D Helmholtz equation on a half-space:
\[
\nabla^2 u(x) + k^2 c(x)^{-2} u(x) = f(x)\,,\quad x\in \Omega \subset \mathbb{R}^2
\]
with absorbing boundary conditions. Given a sequence of sources $\{f_i\}_i$ located on the upper boundary of the domain, the inverse problem aims to recover the wave propagation speed $c(x)$ which represents the property of the subsurface material. In our tests, we use a rectangular domain of size $3 \times 11$ km and collect data for $124$ equally spaced sources / receivers across the $x$-axis from $0.1$km to $10$km at the depth of $0.04$ km in the $z$ axis at a single frequency (to be specified below). The true coefficient is show in Figure~\ref{fig:marm-vels}(a).

To test the robustness of the inversion process to initialization, we consider two experimental setups with different initial guesses. In the first case, the initial velocity is a smoothed version of the ground-truth velocity, while in the second case, the initial velocity is a linearly increasing profile,  see Figure~\ref{fig:marm-vels} (b-c). For each experiment, we perform PDE-constrained optimization with the three objective functions in~\eqref{conventional}, \eqref{variable metric} and~\eqref{data-driven metric}, respectively. These three objective functions correspond to the conventional least-squares method,  the least-squares method with a variable metric, and the least-squares method with the data-driven metric. In both cases, the hyper-parameter $\rho$ in the objective function with a variable metric~\eqref{variable metric} and the one with a data-driven metric~\eqref{data-driven metric} are fixed to be $1$ throughout. To solve the resulting optimization problems we use L-BFGS and run to convergence.

\begin{figure}
\centering
\begin{tabular}{ccc}
\includegraphics[width=0.33\linewidth]{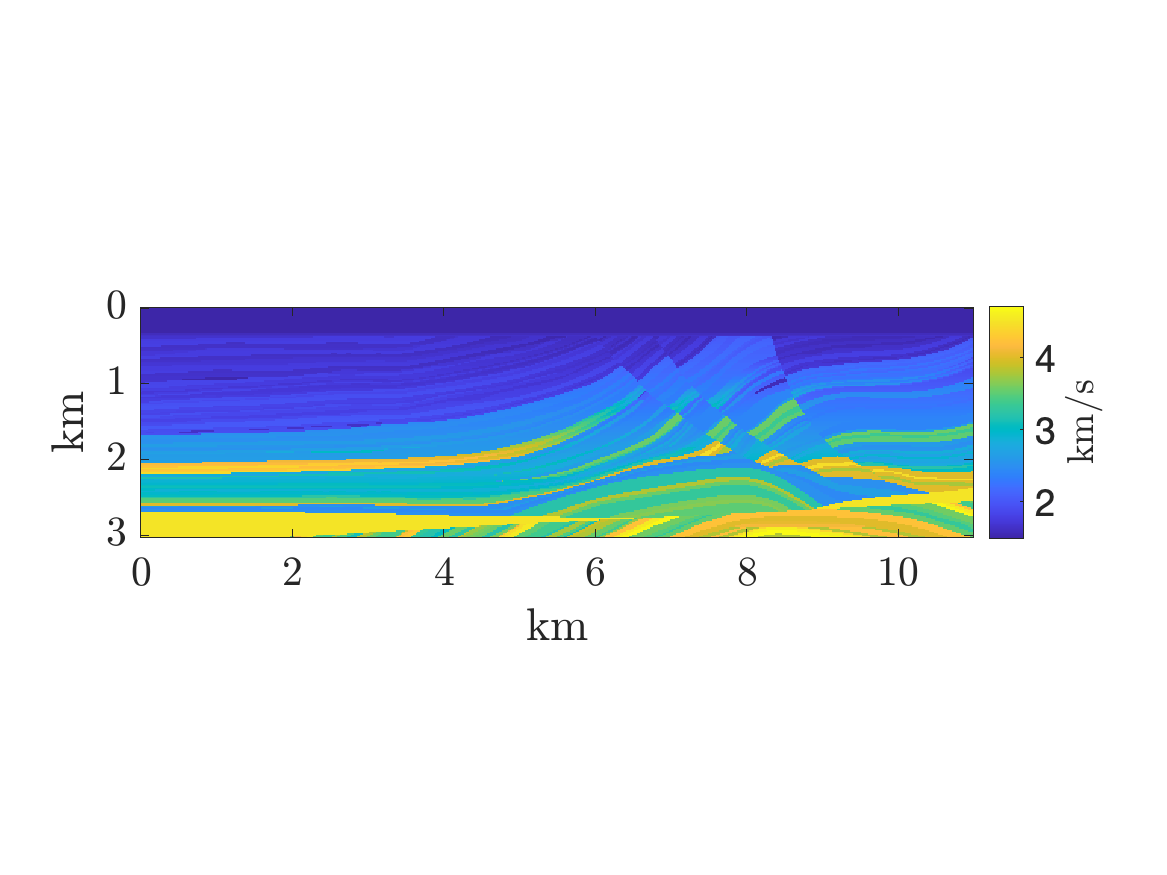}&
\includegraphics[width=0.33\linewidth]{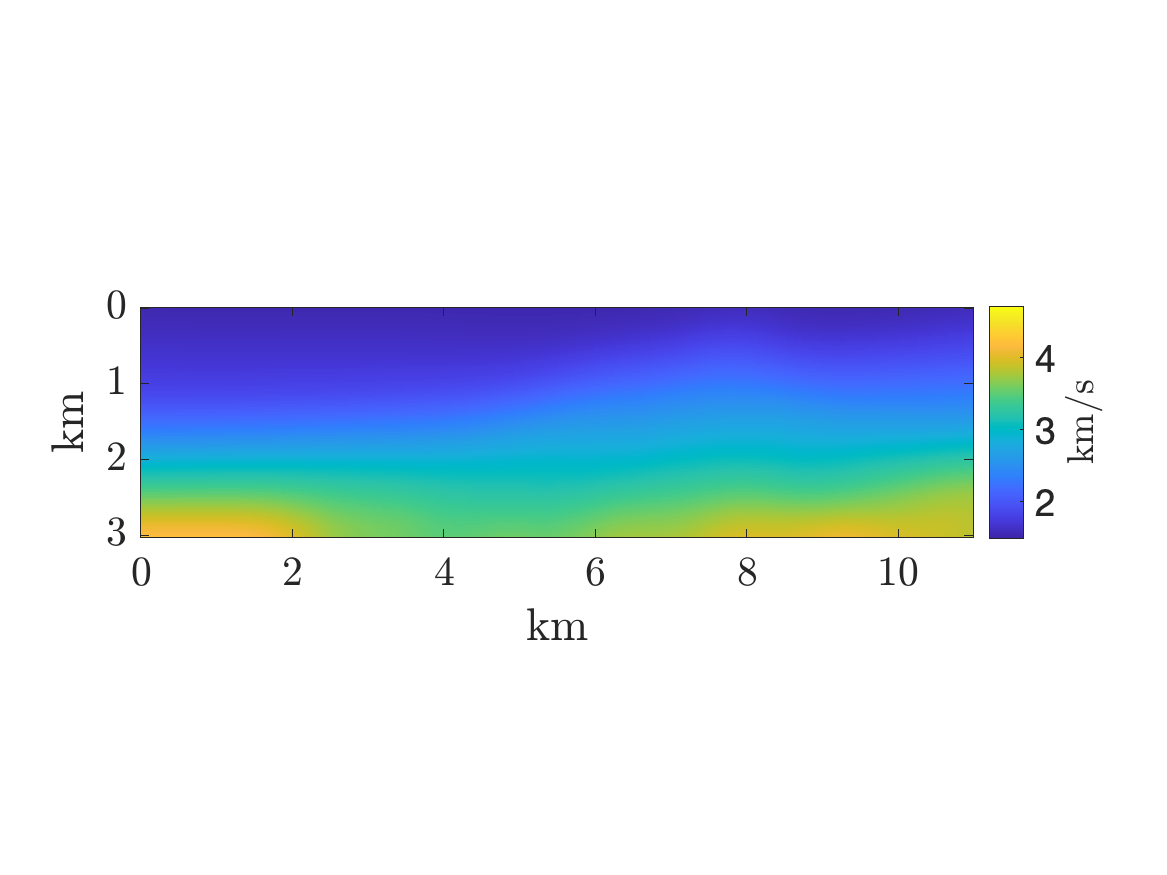}&
\includegraphics[width=0.33\linewidth]{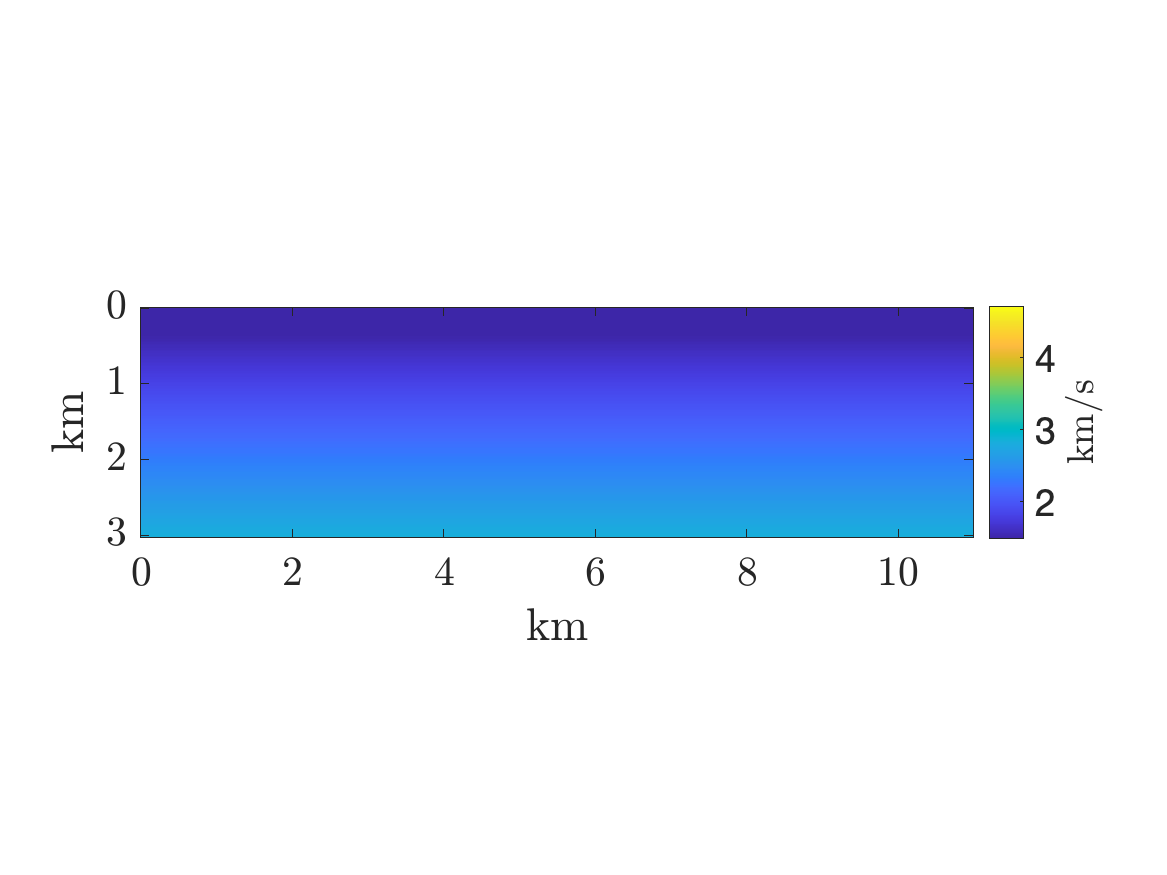}\\
(a)&(b)&(c)\\
\end{tabular}
\caption{Left: ground-truth velocity; middle: initial velocity for Case 1; right: initial velocity for Case 2.}
\label{fig:marm-vels}
\end{figure}

We run the following experiments:
\begin{itemize}
\item[Experiment 1] Here, we set the frequency to be $4$hz, the initial velocity as depicted in Figure~\ref{fig:marm-vels}(b). The top row of Figure~\ref{fig:smooth-vels} shows the reconstructed velocities. The bottom row compares the actual observed and simulated data at receivers from one source. 
\item[Experiment 2] Here, we set the frequency to be $6$hz, the initial velocity as depicted in Figure~\ref{fig:marm-vels}(c). The top row of Figure~\ref{fig:line-vels} shows the reconstructed velocities. The bottom row compares the actual observed and simulated data at receivers from one selected source. 
\end{itemize}

From the numerical results in Figures~\ref{fig:smooth-vels} and~\ref{fig:line-vels}, we see that inversions using the objective function with a variable metric~\eqref{variable metric} and the one with a data-driven metric~\eqref{data-driven metric} achieve better data fitting. At the same time, the conventional method~\eqref{conventional} shows an evident local minimum trapping. Moreover, the reconstructed velocities under~\eqref{variable metric} and~\eqref{data-driven metric} have a better recovery in the deeper part of the velocity (when $z$ is large) with more low-wavenumber components recovered. Even though the weightings in the objective functions apply to the data side, they also consequently scale the velocity components during inversion. Although our theory part in Section~\ref{sec:theory} is based on simple setups with assumptions, this set of tests sheds light on the potential of variable-dependent metric and data-driven metric as an objective function in mitigating local minima in PDE-constrained optimizations.

\begin{figure}
\centering
\includegraphics[width=0.33\linewidth]{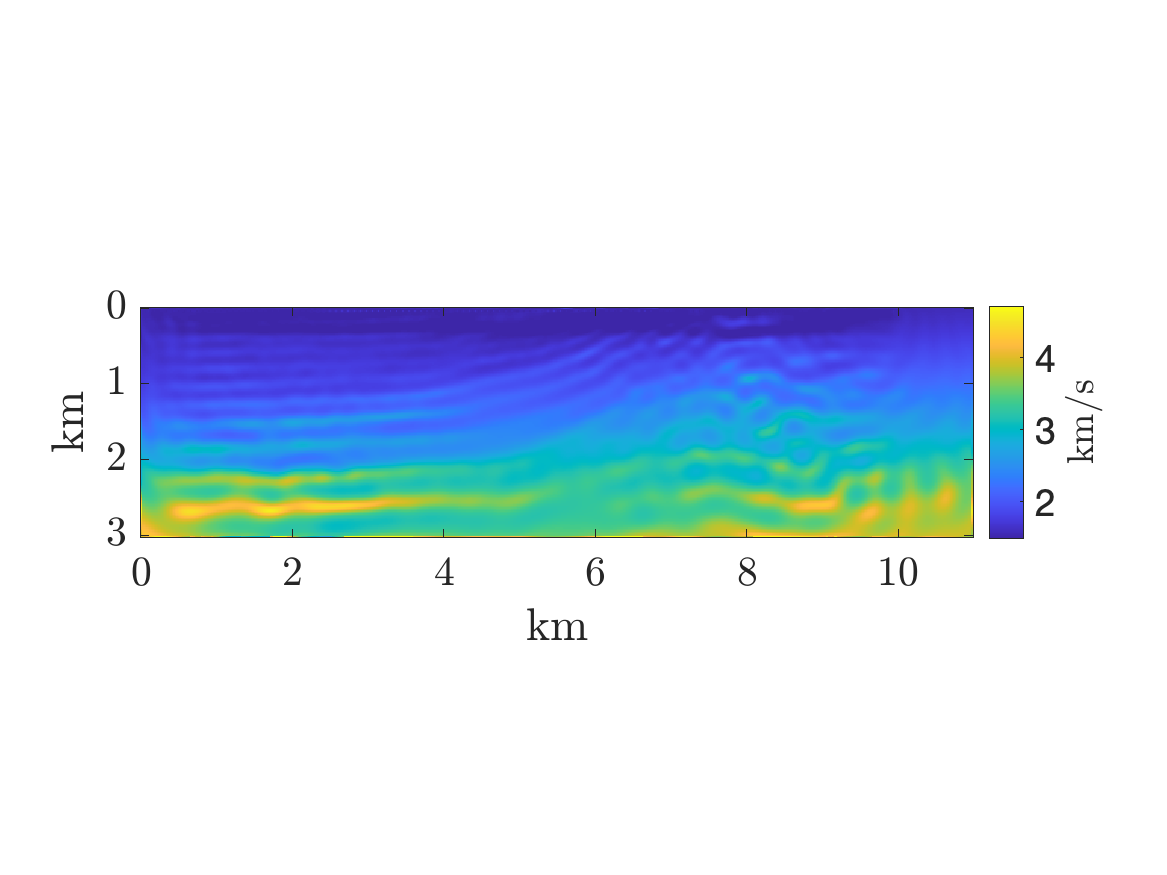}
\includegraphics[width=0.33\linewidth]{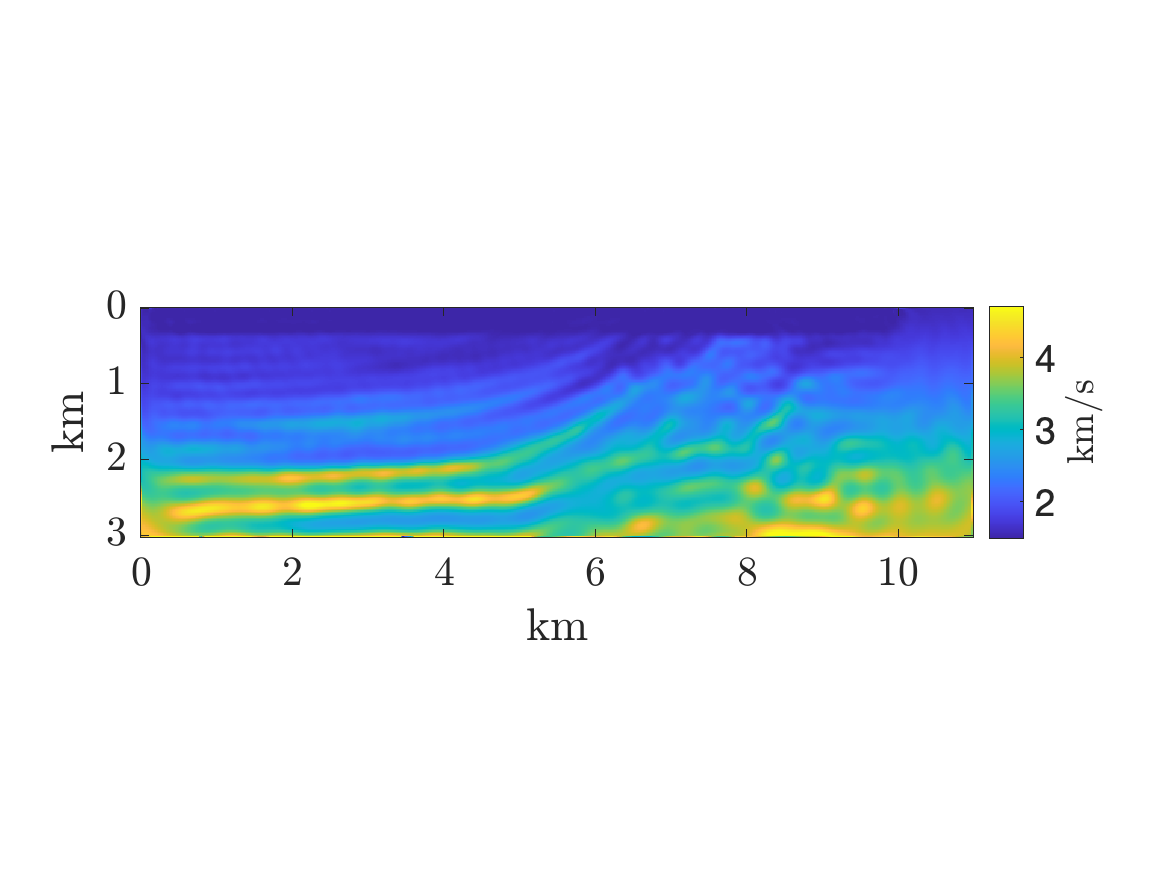}\includegraphics[width=0.33\linewidth]{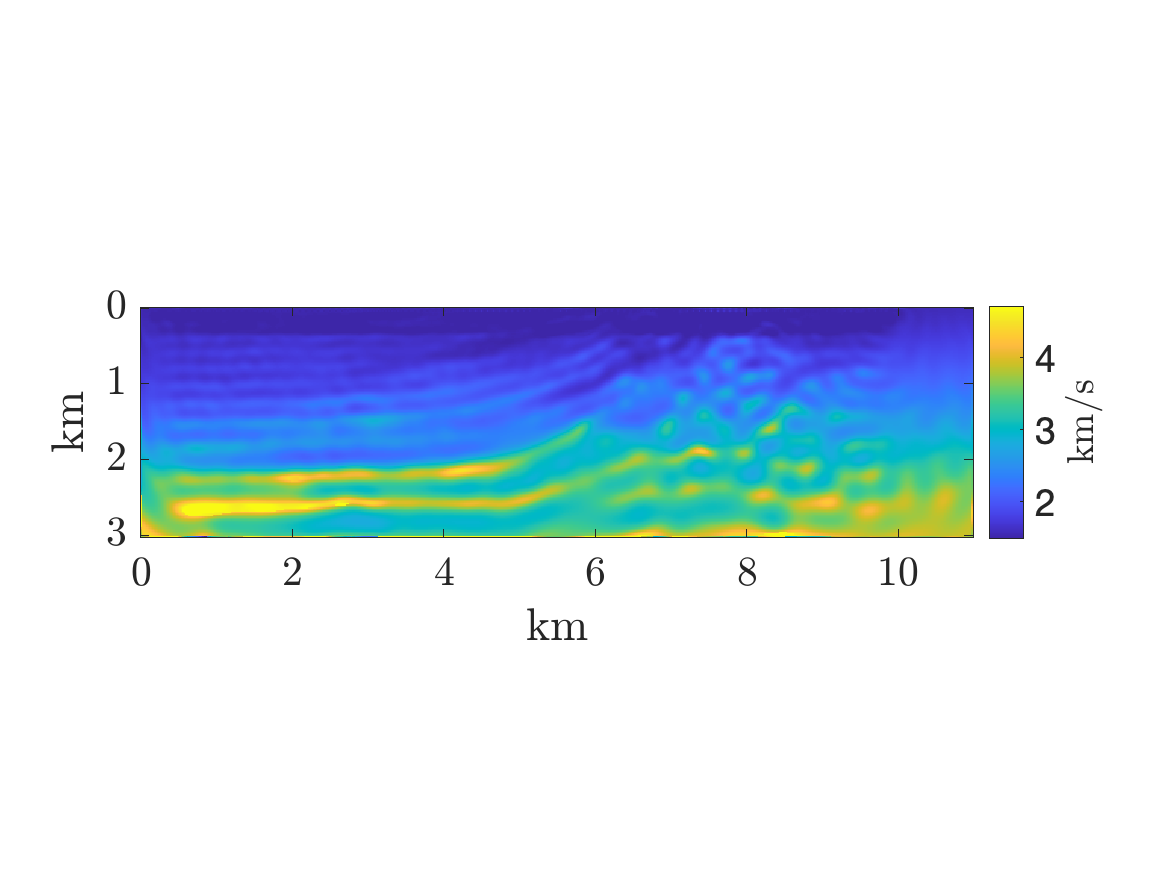}\\
\includegraphics[width=0.33\linewidth]{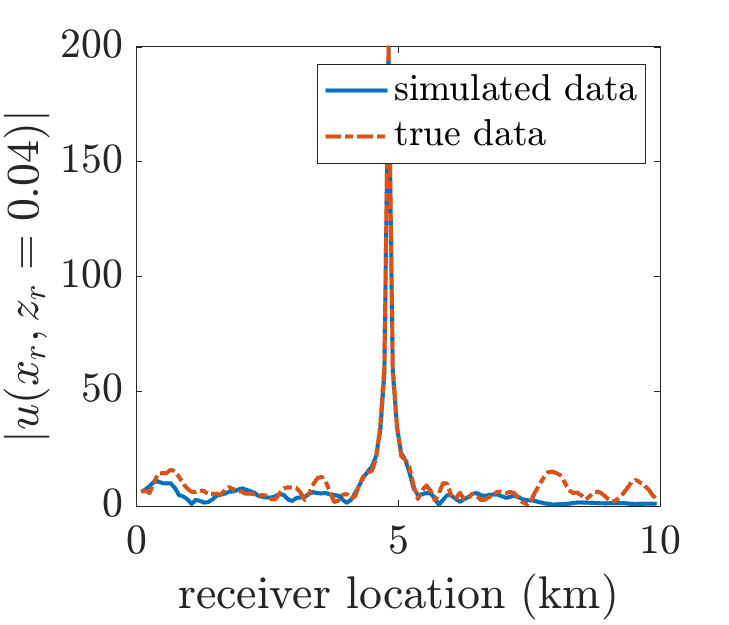}
\includegraphics[width=0.33\linewidth]{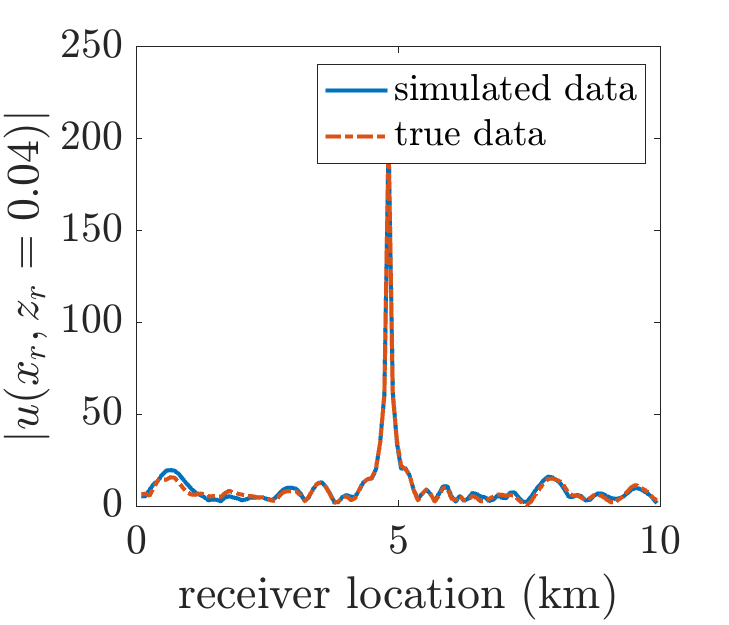}\includegraphics[width=0.33\linewidth]{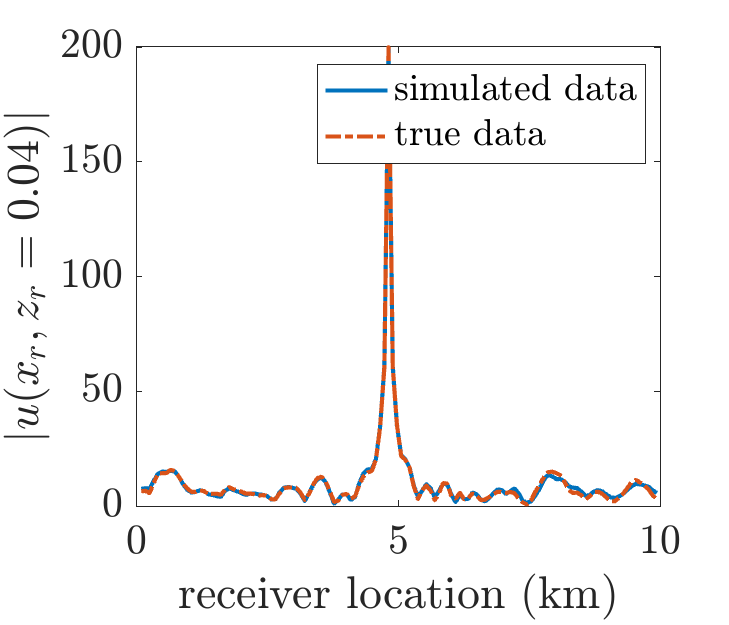}
\caption{The top row shows the reconstructed velocities. The bottom row compares simulated and true data from one source. From left to right shows the result using the objective function~\eqref{conventional}, \eqref{variable metric}, and~\eqref{data-driven metric}, respectively. This sets of result correspond to Case 1, i.e., the initial velocity is the middle panel of Figure~\ref{fig:marm-vels}.}
\label{fig:smooth-vels}
\end{figure}

\begin{figure}
\centering
\includegraphics[width=0.33\linewidth]{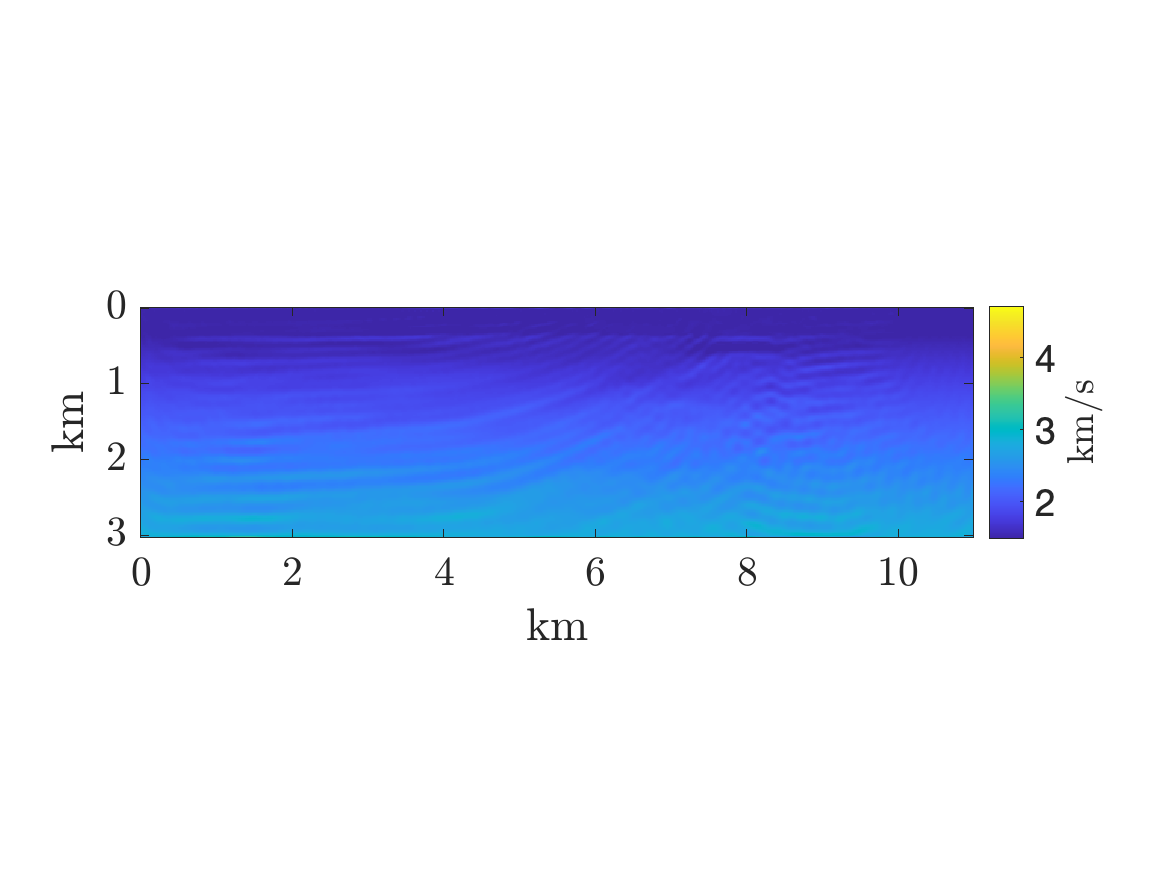}
\includegraphics[width=0.33\linewidth]{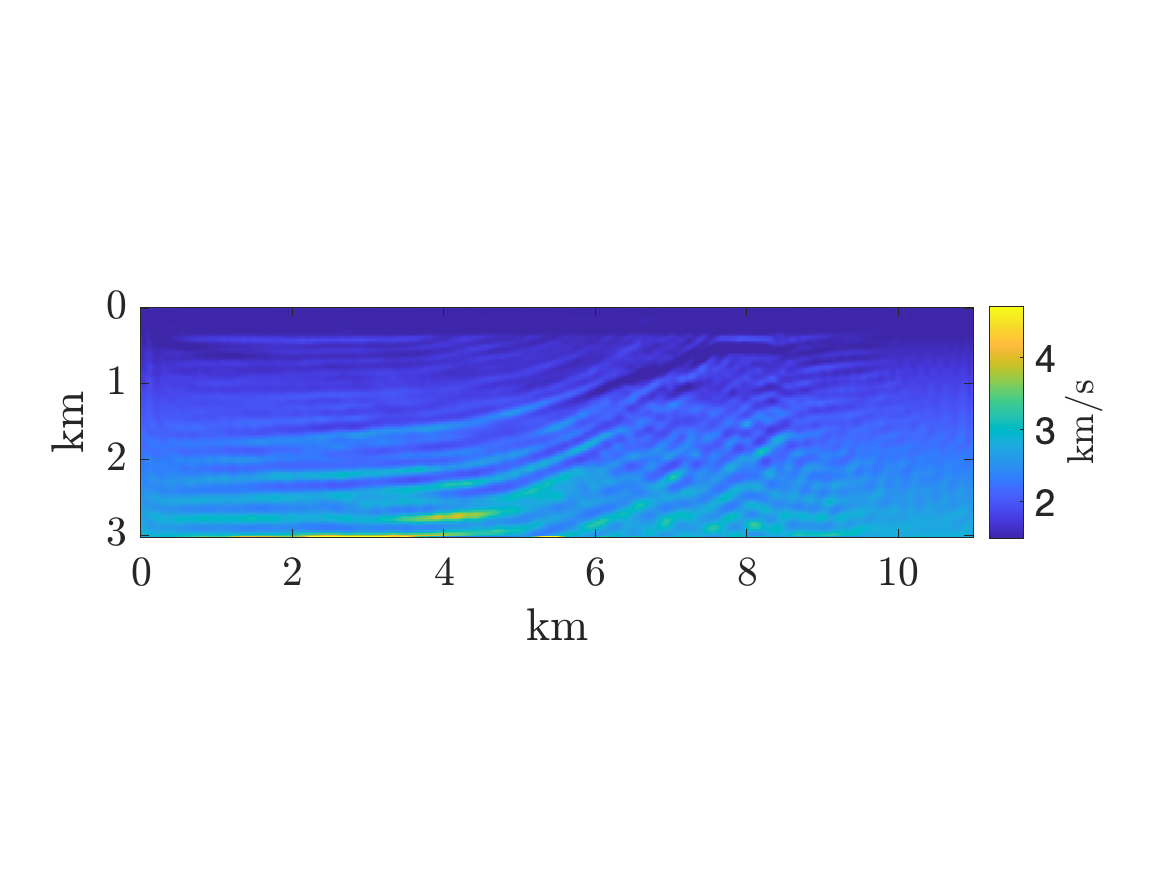}\includegraphics[width=0.33\linewidth]{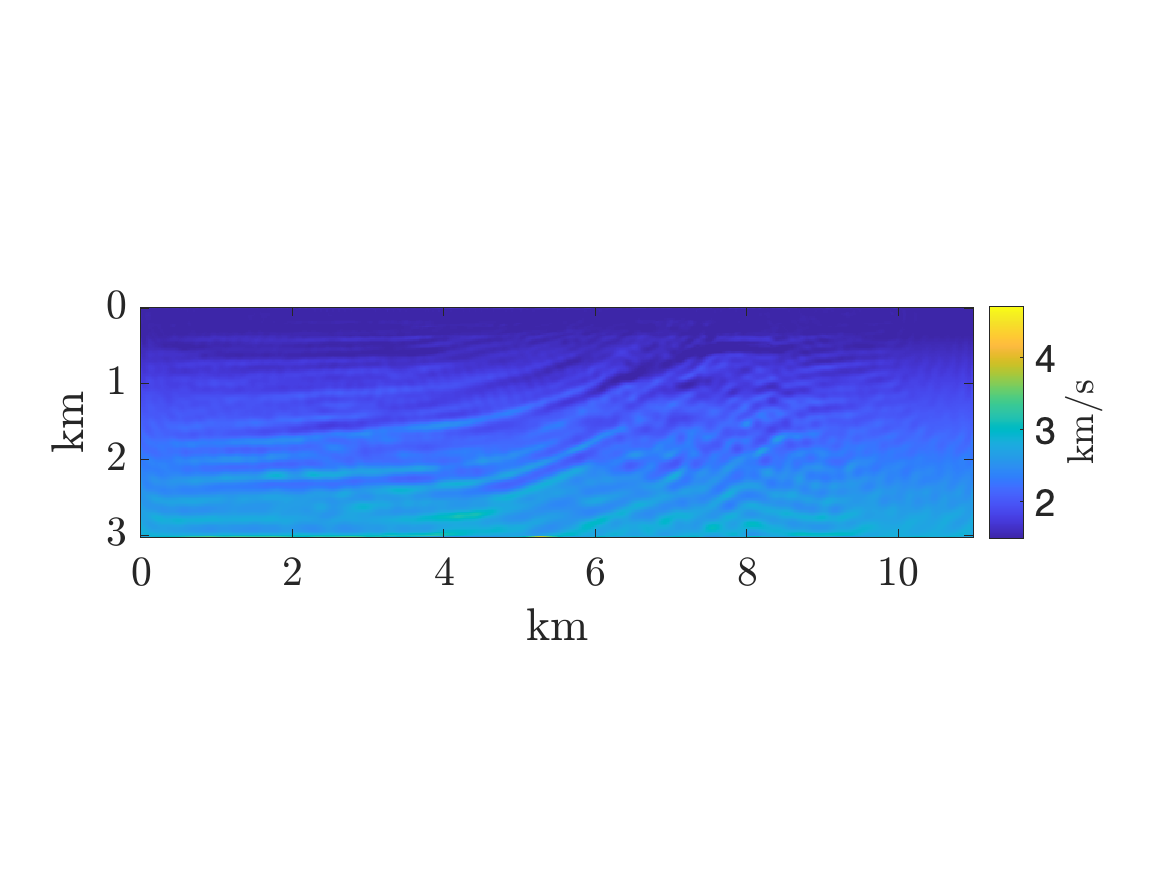}\\
\includegraphics[width=0.33\linewidth]{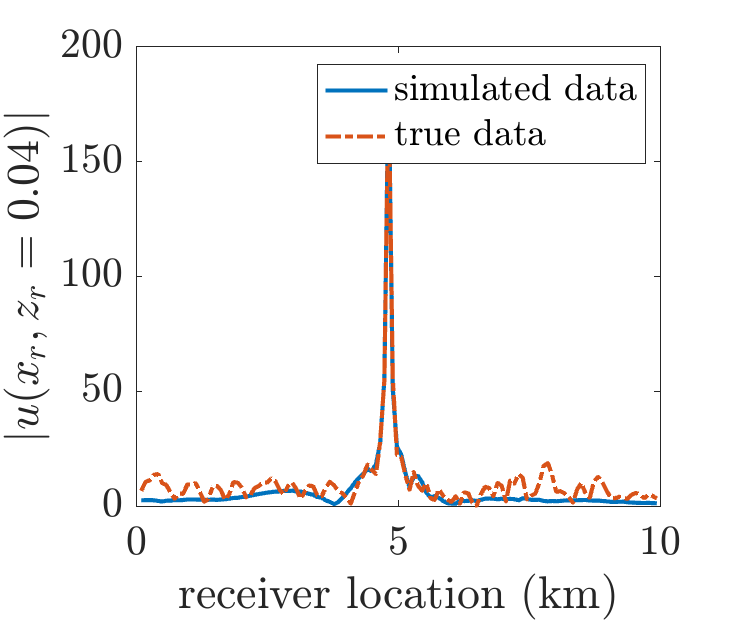}
\includegraphics[width=0.33\linewidth]{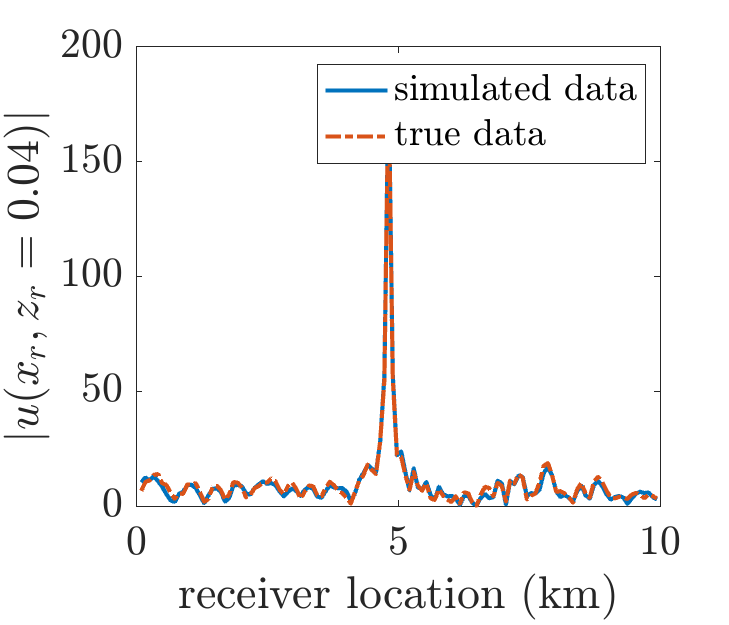}\includegraphics[width=0.33\linewidth]{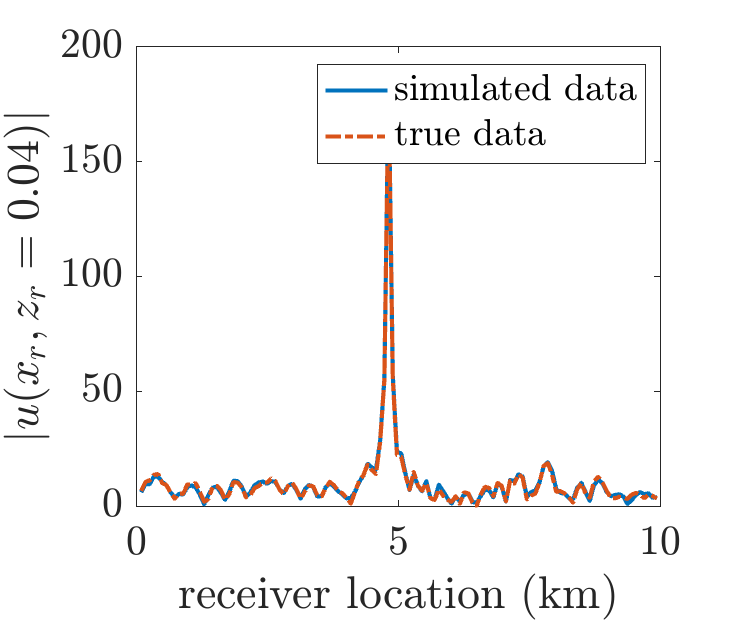}
\caption{The top row shows the reconstructed velocities. The bottom row compares simulated and true data from one source. From left to right shows the result using the objective function~\eqref{conventional}, \eqref{variable metric}, and~\eqref{data-driven metric}, respectively. This sets of result correspond to Case 2, i.e., the initial velocity is the right panel of Figure~\ref{fig:marm-vels}.}
\label{fig:line-vels}
\end{figure}

\section{Conclusion and Discussions}\label{conclusions}
We provided a unified framework for analyzing and discretizing certain PDE-constrained optimization problems arising in inverse problems. Here, the inverse problem is to estimate a spatially varying coefficient (i.e., the parameter) from a finite number of linear measurements of the solution of the PDE (i.e., the state). In particular, we consider constraint relaxation and show that the joint parameter-state estimation problem can be reduced to a variable-metric formulation depending on the parameter alone. This allows us to have two limiting cases, and we interpret these in terms of projections of certain infinite-dimensional residuals onto finite-dimensional subspaces. One limit yields the conventional reduced approach, while the other yields a residual that is affine with respect to the parameter if the PDE operator is affine with respect to the parameter. The latter is arguably more attractive for gradient-based optimization methods. Under further assumptions, we can show that this limiting case, in fact, yields a quadratic problem in terms of the parameter. 

Moreover, the reduced formulation  is the starting point for deriving a data-driven approach, where the variable metric is replaced by one estimated from the measurements. In a few well-chosen case studies, we show how the framework can be applied and how the metric can be computed from the available measurements. In some simple cases, we numerically demonstrate that the data-driven relaxation indeed yields a convex optimization problem with respect to the unknown coefficient. A more complex numerical example in 2D seismic inversion showcases that the variable metric and data-drive metric can achieve a better data-fit and qualitatively superior results to the conventional approach.

The proposed framework forms a natural starting point for implementing the relaxed formulation using the finite element method. For practical implementation on large-scale problems, the matrix defining the metric (either variable or data-driven) will need to be approximated. Whether such approximations will retain the benefits of constraint relaxation will need to be investigated further. Another practical issue we leave for future work is the effect of noise on the data-driven metric. We furthermore expect that stronger statements about the quadratic nature of the relaxed problem can be made in specific cases, and this also is the subject of ongoing research.

\section*{Acknowledgements}
The first author thanks Gabrio Rizzuti, Felix Herrmann, and Bill Symes for the numerous fruitful discussions about this topic. The second author is partially supported by the Office of Naval Research (ONR) under grant N00014-24-1-2088 and the National Science Foundation under grant DMS-2409855.  We also gratefully acknowledge the Banff International Research Station for their support of the workshop \emph{New Ideas in Computational Inverse Problems} (22w5118), during which the foundations of this paper were laid.

\appendix
\section{Auxiliary results}
\begin{lma}\label{lemma:optimality}
Given a the functional $J : \mathsf{U}\rightarrow \mathbb{R}$ defined as
\[
J(q) = \sum_{i=1}^n  {\textstyle\frac{1}{2}}|\langle v_i, q \rangle_{\mathsf{U}} - b_i|^2 +  {\textstyle\frac{\rho}{2}} \|q\|_{\mathsf{U}}^2,
\]
where $\{v_i\}_{i=1}^n$ are linearly independent, it admits minimizers of the form
\[
q = \sum_{i=1}^n \overline{\alpha_i} v_i,
\]
where the coefficients $\boldsymbol{\alpha} \in \mathbb{C}^n$ are solved from
\[
\left(G + \rho I\right)\boldsymbol{\alpha} = \mathbf{b},
\]
with $G_{ij} = \langle v_i, v_j\rangle_\mathsf{U}$.
\end{lma}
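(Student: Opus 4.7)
The plan is to prove this by the standard representer-theorem argument in three steps: (i) collapse the minimization onto $V := \mathrm{span}\{v_i\}_{i=1}^n$ via an orthogonal decomposition, (ii) parameterize the restricted problem as a finite-dimensional quadratic in $\boldsymbol{\alpha}$, and (iii) read off the claimed normal equations from its first-order conditions. First, I would note that for $\rho > 0$ the functional $J$ is strongly convex and coercive on the Hilbert space $\mathsf{U}$, so a minimizer exists and is unique; this will also confirm that the normal equations obtained below indeed characterize the minimizer rather than merely a critical point.

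Next, since $V$ is finite-dimensional and hence closed in $\mathsf{U}$, any $q \in \mathsf{U}$ decomposes uniquely as $q = q_V + q_\perp$ with $q_V \in V$ and $q_\perp \in V^\perp$. The key observation is that $\langle v_i, q_\perp\rangle_\mathsf{U} = 0$ for every $i$, so the data-fidelity sum depends on $q$ only through $q_V$, while the penalty splits as $\|q\|_\mathsf{U}^2 = \|q_V\|_\mathsf{U}^2 + \|q_\perp\|_\mathsf{U}^2$. Hence reducing $q_\perp$ to $0$ can only decrease $J$, which forces the minimizer to lie in $V$ and justifies the ansatz $q = \sum_{i=1}^n \overline{\alpha_i}\, v_i$.

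Substituting this ansatz and respecting the sesquilinearity convention (linear in the first slot, antilinear in the second), I would compute
\[
\langle v_j, q\rangle_\mathsf{U} = \sum_{i=1}^n \overline{\,\overline{\alpha_i}\,}\,\langle v_j, v_i\rangle_\mathsf{U} = \sum_{i=1}^n \alpha_i\, G_{ji} = (G\boldsymbol{\alpha})_j,
\]
and analogously $\|q\|_\mathsf{U}^2 = \boldsymbol{\alpha}^* G \boldsymbol{\alpha}$, using that $G$ is Hermitian. The functional then reduces to the finite-dimensional quadratic
\[
\widetilde{J}(\boldsymbol{\alpha}) = \tfrac{1}{2}\|G\boldsymbol{\alpha} - \mathbf{b}\|_2^2 + \tfrac{\rho}{2}\,\boldsymbol{\alpha}^* G \boldsymbol{\alpha}.
\]
Because $\{v_i\}_{i=1}^n$ are linearly independent, $G$ is Hermitian positive definite and hence invertible. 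Applying Wirtinger calculus (or splitting into real and imaginary parts) to set $\nabla_{\overline{\boldsymbol{\alpha}}}\widetilde{J} = 0$ gives $G(G\boldsymbol{\alpha} - \mathbf{b}) + \rho G \boldsymbol{\alpha} = G\bigl((G+\rho I)\boldsymbol{\alpha} - \mathbf{b}\bigr) = 0$, and multiplying through by $G^{-1}$ yields the claimed system $(G + \rho I)\boldsymbol{\alpha} = \mathbf{b}$, which has a unique solution since $G + \rho I$ is positive definite.

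The only real obstacle is bookkeeping: one has to track the antilinearity convention carefully, because it is precisely this convention that dictates writing the ansatz with $\overline{\alpha_i}$ rather than $\alpha_i$, and it governs where conjugates appear when forming $\langle v_j, q\rangle_\mathsf{U}$ and $\|q\|_\mathsf{U}^2$. With the correct conventions in place, the argument is otherwise the textbook reproducing-kernel / Tikhonov representer theorem.
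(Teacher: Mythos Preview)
Your proposal is correct. The main difference from the paper is in how you reduce to the finite-dimensional span: you use the orthogonal decomposition $q = q_V + q_\perp$ and observe that the data-fidelity term ignores $q_\perp$ while the penalty strictly increases with it, which is the classical representer-theorem route. The paper instead computes the first variation $J(q+\epsilon h) - J(q)$ in the full space $\mathsf{U}$, splits $q$ into real and imaginary parts to obtain a genuine (real-linear) Fr\'echet derivative, and reads off from the optimality condition that the minimizer must lie in $\mathrm{span}\{v_i\}$. Your approach is arguably cleaner: it sidesteps the complex-differentiation bookkeeping entirely and also delivers existence and uniqueness up front via strong convexity and coercivity, which the paper's proof does not explicitly address. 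From the point where both arguments have reduced to $\widetilde{J}(\boldsymbol{\alpha}) = \tfrac12\|G\boldsymbol{\alpha}-\mathbf{b}\|_2^2 + \tfrac{\rho}{2}\boldsymbol{\alpha}^*G\boldsymbol{\alpha}$ and derived the normal equations $G\bigl((G+\rho I)\boldsymbol{\alpha}-\mathbf{b}\bigr)=0$, the two proofs coincide.
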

\begin{proof}
First, for any $\epsilon>0$ and $h\in U$, consider
\[
J(q + \epsilon h) - J(q) =  {\textstyle\frac{1}{2}}\sum_{i=1}^n \left[ {2} \epsilon \Re  \Big(\left(\langle v_i, q \rangle -  b_i\right)\langle h, v_i\rangle\Big) + \epsilon^2 |\langle h, v_i \rangle|^2\right] +  {\textstyle\frac{\rho}{2}}\left( {2}\epsilon \Re  \langle h, q\rangle + \epsilon^2 \|h\|_\mathsf{U}^2\right)\,,
\]
where ``$\Re\, x $'' denotes the real part of the complex argument $x$.

Note that ignoring the $\mathcal{O}(\epsilon^2)$ terms does not give us the Fr\'echet derivative since the resulting operator is not a linear operator over $\mathbb{C}$. We address this by introducing $q = q_R + \imath q_I$ with $\imath$ being the imaginary unit and $q_R$ and $q_I$ real-valued. Consider the Fr\'echet derivative with respect to $(q_R, q_I)$ evaluated at the direction $(h_R , h_I)$:
\[
DJ_{(q_R,q_I)}(h_R,h_I) = \Re\left[\left\langle h_R, \sum_{i=1}^n \beta_i v_i + \rho q \right\rangle + \imath \left\langle h_I,\sum_{i=1}^n \beta_i v_i + \rho q \right\rangle\right],
\]
with $\beta_i = \overline{\langle v_i, q \rangle -  b_i}$. The optimality condition, $DJ_{(q_R,q_I)} = 0$, is satisfied by letting $q$ be of the form
\[
q = \sum_{i=1}^n \overline{\alpha_i} v_i,
\]
where the coefficients $\boldsymbol{\alpha} = [\alpha_1,\cdots,\alpha_n]^\top \in \mathbb{C}^n$ can be determined by plugging this expression for $q$ back into $J$:
\[
J(\boldsymbol{\alpha}) =  {\textstyle\frac{1}{2}}\|G\boldsymbol{\alpha} - \mathbf{b}\|_2^2 +  {\textstyle\frac{\rho}{2}}\boldsymbol{\alpha}^*G\boldsymbol{\alpha},
\]
with $G_{ij} = \langle v_i, v_j\rangle_\mathsf{U}$.
The corresponding normal equations are
\[
\left(G^2 + \rho G\right)\boldsymbol{\alpha} = G\mathbf{b},
\]
which reduces to
\[
\left(G+ \rho I\right)\boldsymbol{\alpha} = \mathbf{b},
\]
because $G$ has full rank as a result of $\{v_i\}_{i=1}^n$ being linearly independent.
\end{proof}

\begin{lma}
\label{lma:hlmholtz1}
Let 
\[
\mathsf{U} = \left\{ u : [0,1]\rightarrow \mathbb{C} \, \left|\, \|u\|_{\mathsf{U}} < \infty, u(0) = 0\right.\right\},
\]
with $\langle u, v \rangle_\mathsf{U} = \int_0^1 u'(x)\overline{v'(x)}\mathrm{d}x$ and define its dual in the usual way.  Consider the weak formulations for the forward and adjoint 1D Helmholtz equation
\[
\mathcal{A}_{c,k}(u(\cdot\,;\, k), \phi) = \mathcal{P}(\phi)\quad \forall \phi \in \mathsf{U},
\]
\[
\overline{\mathcal{A}_{c,k}(\phi, w(\cdot \,;\, k))} = \mathcal{P}(\phi)\quad \forall \phi \in \mathsf{U},
\]
with
\[
\mathcal{A}_{c,k}(u,v) = \int_0^1 u'(x) \overline{v'(x)}\mathrm{d}x - k^2 \int_0^1 c(x)^{-2}u(x) \overline{v(x)}\mathrm{d}x -\imath k c(1)^{-1}u(1)\overline{v(1)},
\]
\[
\mathcal{P}(\phi) = \int_0^1 f(x)\overline{\phi(x)}\mathrm{d}x,
\]
for real-valued $f\in H^{-1}$. These weak formulations are well-posed \cite{Ihlenburg1997} and we have the following relation between their solutions
\[
w(\cdot,k) = \overline{u(\cdot,k)}.
\]
\end{lma}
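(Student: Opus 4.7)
The plan is to verify by direct substitution that $\overline{u(\cdot,k)}$ satisfies the adjoint weak formulation, then invoke uniqueness (guaranteed by the cited well-posedness result of Ihlenburg–Babu\v{s}ka) to conclude that $w(\cdot,k) = \overline{u(\cdot,k)}$. Concretely, I would fix an arbitrary test function $\phi\in\mathsf{U}$, start from the forward equation $\mathcal{A}_{c,k}(u,\psi)=\mathcal{P}(\psi)$, specialize to the test function $\psi=\overline{\phi}$, and then conjugate to reach $\overline{\mathcal{A}_{c,k}(\phi,\overline{u})}=\mathcal{P}(\phi)$.

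The key algebraic identity powering the argument is the ``symmetry under swap'' $\mathcal{A}_{c,k}(u,\overline{\phi}) = \mathcal{A}_{c,k}(\phi,\overline{u})$. This is obvious once one notes that the antilinearity in the second slot strips the conjugation off of $\overline{\phi}$ (respectively off of $\overline{u}$), leaving an integrand that is pointwise commutative: each of the three terms takes the manifestly symmetric form $\int u'\phi'\,\mathrm{d}x$, $k^2\int c^{-2}u\phi\,\mathrm{d}x$, and $\imath k c(1)^{-1}u(1)\phi(1)$. Hence writing $\mathcal{A}_{c,k}(u,v)=B(u,\overline{v})$ for a symmetric bilinear form $B$, the swap identity is just $B(u,\phi)=B(\phi,u)$. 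Combined with the computation $\mathcal{P}(\overline{\phi})=\int_0^1 f(x)\phi(x)\,\mathrm{d}x=\overline{\mathcal{P}(\phi)}$, which uses that $f$ is real-valued, the forward equation evaluated at $\psi=\overline{\phi}$ yields
\[
\mathcal{A}_{c,k}(\phi,\overline{u}) \;=\; \mathcal{A}_{c,k}(u,\overline{\phi}) \;=\; \mathcal{P}(\overline{\phi}) \;=\; \overline{\mathcal{P}(\phi)},
\]
and conjugating gives $\overline{\mathcal{A}_{c,k}(\phi,\overline{u})}=\mathcal{P}(\phi)$, i.e., $\overline{u}$ solves the adjoint problem.

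The only non-algebraic step that needs a sentence of justification is that the specialization $\psi=\overline{\phi}$ is legal, i.e., $\overline{\phi}\in\mathsf{U}$. This follows because the two defining conditions on $\mathsf{U}$ are invariant under conjugation: $\|\overline{\phi}\|_{\mathsf{U}}^2=\int_0^1|\phi'(x)|^2\,\mathrm{d}x=\|\phi\|_{\mathsf{U}}^2$, and $\overline{\phi(0)}=\overline{0}=0$. Hence the conjugation is a well-defined involution of $\mathsf{U}$. After this is noted, I conclude by citing \cite{Ihlenburg1997} for uniqueness of the solution of the adjoint problem, which forces $w(\cdot,k)=\overline{u(\cdot,k)}$.

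I do not expect a real obstacle here; the statement is essentially a bookkeeping exercise reflecting that the spatial part of the 1D Helmholtz operator is symmetric (self-transpose) even though the absorbing boundary term makes the sesquilinear form non-self-adjoint. The ``hard part'', if any, is purely notational: keeping straight which conjugations are stripped off by the antilinearity of $\mathcal{A}_{c,k}$ in its second slot versus which appear by taking complex conjugate of a scalar equation, and ensuring the $\imath k c(1)^{-1}$ boundary contribution flips sign the correct number of times so that the adjoint equation comes out with the sign of the imaginary unit reversed exactly once (relative to the forward equation) before the final conjugation restores it.
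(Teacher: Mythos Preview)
Your proposal is correct and follows essentially the same route as the paper: both proofs rely on the symmetry identity $\mathcal{A}_{c,k}(u,\overline{\phi})=\mathcal{A}_{c,k}(\phi,\overline{u})$ (the paper writes it as $\mathcal{A}_{c,k}(u,v)=\mathcal{A}_{c,k}(\overline{v},\overline{u})$), combine it with $\mathcal{P}(\overline{\phi})=\overline{\mathcal{P}(\phi)}$ from the real-valuedness of $f$, and then substitute $\psi=\overline{\phi}$ to show $\overline{u}$ satisfies the adjoint equation. Your explicit check that $\mathsf{U}$ is closed under conjugation and your explicit appeal to uniqueness are details the paper leaves implicit, but otherwise the arguments are the same.
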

\begin{proof}
First note that the adjoint solution satisfies
\[
\mathcal{A}_{c,k}(\phi, w(\cdot \,;\, k)) = \overline{\mathcal{P}(\phi)} = \mathcal{P}(\overline{\phi})\quad \forall \phi \in \mathsf{U},
\]
since $f$ is real-valued. Note furthermore that
\[
\mathcal{A}_{c,k}(u,v) = \mathcal{A}_{c,k}(\overline{v},\overline{u})\quad \forall u,v\in\mathsf{U}.
\]
Thus the solution to the forward problem, $u(\cdot;k)$, satisfies
\[
\mathcal{A}_{c,k}(\overline{\phi},\overline{u(\cdot;k)})=\mathcal{P}({\phi})\quad\forall\phi\in\mathsf{U}.
\]
Introducing $\psi = \overline{\phi}$ yields
\[
\mathcal{A}_{c,k}(\psi,\overline{u(\cdot;k)})=\mathcal{P}(\overline{\psi}) =\overline{\mathcal{P}(\psi)}\quad\forall\psi\in\mathsf{U}.
\]
Thus, $\overline{u(\cdot;k)}$ satisfies the adjoint equation, implying that the solutions to the forward and adjoint equations are related as $w(\cdot;k)=\overline{u(\cdot;k)}$ as stated.
\end{proof}

\begin{lma}
\label{lma:hlmholtz3}
Let $d_{ij} = \mathcal{P}_i(u_j)$ with $u_j$ the solution of $\mathcal{A}_{c,k}(u_j,\phi)=\mathcal{P}_j(\phi)$, $\forall \phi \in \mathsf{U}$, as defined in Lemma \ref{lma:hlmholtz1}. Then we have 
\[
d_{ij} = d_{ji}.
\]
\end{lma}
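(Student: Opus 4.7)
The plan is to leverage two ingredients already packaged in Lemma~\ref{lma:hlmholtz1}: the relation $w_i(\cdot;k) = \overline{u_i(\cdot;k)}$ between forward and adjoint solutions (which holds because the sources $f_i$ are real-valued), and the symmetry identity $\mathcal{A}_{c,k}(u,v) = \mathcal{A}_{c,k}(\overline{v},\overline{u})$ used inside that proof. I would first quickly record this second identity as a separate observation (a direct check against the definition of $\mathcal{A}_{c,k}$ suffices, since complex conjugation swaps the two slots of the sesquilinear pairing and leaves $c^{-2}$ and the boundary term invariant in the required way).

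First I would rewrite the data in terms of $\mathcal{A}_{c,k}$ by testing the weak form for $u_i$ against $\phi = u_j$: this gives $d_{ij} = \mathcal{P}_i(u_j) = \mathcal{A}_{c,k}(u_i,u_j)$, and symmetrically $d_{ji} = \mathcal{A}_{c,k}(u_j,u_i)$. The bilinear form is \emph{not} itself symmetric in its two slots, so this alone is not enough; the second step is to route through the adjoint equation. Testing $\overline{\mathcal{A}_{c,k}(\phi,w_i)} = \mathcal{P}_i(\phi)$ with $\phi = u_j$ and invoking $w_i = \overline{u_i}$ yields
\[
\mathcal{A}_{c,k}(u_j,\overline{u_i}) \;=\; \overline{\mathcal{P}_i(u_j)} \;=\; \overline{d_{ij}},
\]
and the same argument with indices swapped gives $\mathcal{A}_{c,k}(u_i,\overline{u_j}) = \overline{d_{ji}}$.

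Finally I would apply the identity $\mathcal{A}_{c,k}(u,v) = \mathcal{A}_{c,k}(\overline{v},\overline{u})$ with $(u,v) = (u_j,\overline{u_i})$ to conclude
\[
\overline{d_{ij}} \;=\; \mathcal{A}_{c,k}(u_j,\overline{u_i}) \;=\; \mathcal{A}_{c,k}(u_i,\overline{u_j}) \;=\; \overline{d_{ji}},
\]
and hence $d_{ij} = d_{ji}$.

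I do not foresee a genuine obstacle here: the statement is a reciprocity/symmetry fact for a self-adjoint-up-to-conjugation Helmholtz operator excited by real sources, and everything reduces to careful bookkeeping of conjugates in the sesquilinear pairing. The only place to be cautious is that $\mathcal{A}_{c,k}$ is sesquilinear rather than bilinear, so the reciprocity appears only after one complex conjugation — which is exactly what the adjoint identification $w_i = \overline{u_i}$ supplies.
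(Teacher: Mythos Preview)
Your proof is correct and follows essentially the same approach as the paper: both hinge on the adjoint identification $w_i=\overline{u_i}$ from Lemma~\ref{lma:hlmholtz1} and a conjugation-swap identity. The paper's route is marginally shorter in that it avoids the explicit symmetry identity $\mathcal{A}_{c,k}(u,v)=\mathcal{A}_{c,k}(\overline{v},\overline{u})$ by instead pulling conjugation directly through $\mathcal{P}_j$ (using that $f_j$ is real) to obtain $d_{ij}=\overline{\mathcal{A}_{c,k}(u_j,w_i)}=\overline{\mathcal{P}_j(w_i)}=\mathcal{P}_j(\overline{w_i})=\mathcal{P}_j(u_i)=d_{ji}$.
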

\begin{proof}
We have $d_{ij}=\mathcal{P}_i(u_j) = \overline{\mathcal{A}_{c,k}(u_j,w_i)} = \overline{\mathcal{P}_j(w_i)}$ using the definition of the adjoint equation. 
Using the fact that $f_i$ is real-valued we have $\overline{\mathcal{P}_j(w_i)}=\mathcal{P}_j(\overline{w_i})$. Using Lemma~\ref{lma:hlmholtz1}, this yields $\mathcal{P}_j(\overline{w_i}) = \mathcal{P}_j(u_i) = d_{ji}$, completing the proof.
\end{proof}

\begin{lma}
\label{lma:hlmholtz2}
Define
\[
g_{ij}(k) = \langle w_i(\cdot; k),  w_j(\cdot; k)\rangle_{\mathsf{U}},
\]
where $w_i(\cdot; k)$ is a (weak) solution of the adjoint 1D Helmholtz equation, as defined in Lemma~\ref{lma:hlmholtz1}. 
Furthermore, let $d_{ij}(k) = \mathcal{P}_i(u_j(\cdot;k))$ and $b_i(k) = u_i(1;k)$ with $u_i(\cdot; k)$ a (weak) solution of the 1D Helmholtz equation. Then we have
\begin{equation}
g_{ij}(k) = \Re\left(d_{ij}(k) + {\textstyle\frac{k}{2}}d_{ij}'(k)\right) + {\textstyle\frac{\imath k^2}{2c(1)}}\left(\overline{b_i'(k)}b_j(k) - \overline{b_i(k)}b_j'(k)\right),
\end{equation}
where $d_{ij}'$ and $b_i'$ denote the derivatives of $d_{ij}$ and $b_i$ with respect to $k$. 
\end{lma}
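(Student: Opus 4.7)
\bigskip

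The plan is to express the Gram entry $g_{ij}(k)$ through the weak form, isolate a single weighted $L^2$-type bulk quantity that needs to be eliminated, trade that bulk for a $k$-derivative of the data using implicit differentiation, and finally symmetrise using Hermitianness of $G$ together with the data symmetry $d_{ij}=d_{ji}$ from Lemma~\ref{lma:hlmholtz3}.

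First, by Lemma~\ref{lma:hlmholtz1} we have $w_i(\cdot;k)=\overline{u_i(\cdot;k)}$, so
$g_{ij}(k)=\int_0^1\overline{u_i'(x;k)}\,u_j'(x;k)\,\mathrm{d}x$.
Testing the forward weak form for $u_j$ against $\phi=u_i$ and using Lemma~\ref{lma:hlmholtz3} (to replace $d_{ji}$ by $d_{ij}$) yields
$g_{ij}(k) = d_{ij}(k) + k^2 S_{ij}(k) + \imath k c(1)^{-1}b_j(k)\overline{b_i(k)}$,
where $S_{ij}(k):=\int_0^1 c(x)^{-2}u_j\overline{u_i}\,\mathrm{d}x$ is the single bulk quantity left to eliminate in terms of the measurements.

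To remove $S_{ij}$, I would differentiate the weak form for $u_j$ with respect to $k$ (holding $\phi$ fixed) and test against $\phi = u_i$, producing
$\mathcal{A}_{c,k}(\partial_k u_j,\,u_i) = 2k\, S_{ij}(k) + \imath c(1)^{-1} b_j\overline{b_i}$.
The left-hand side is \emph{not} immediately $d_{ij}'(k)$, because the identity $d_{ij}(k)=\mathcal{P}_i(u_j(\cdot;k))$ gives $d_{ij}'(k) = \mathcal{A}_{c,k}(u_i,\partial_k u_j)$ with the arguments reversed. The bridge is the identity
$\overline{\mathcal{A}_{c,k}(u,v)} = \mathcal{A}_{c,k}(v,u) + 2\imath k c(1)^{-1}\,v(1)\overline{u(1)}$,
verified by direct computation from the definition of $\mathcal{A}_{c,k}$; the extra boundary contribution is the defect from Hermitianness induced by the absorbing condition at $x=1$. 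Applying this identity to $u=\partial_k u_j$, $v=u_i$ and then conjugating supplies the missing relation, which lets me solve for $S_{ij}(k)$ explicitly in terms of $d_{ij}'(k)$ and the boundary data $b_i,b_j,b_i',b_j'$.

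Substitution yields a first closed expression for $g_{ij}(k)$, but the boundary corrections emerge asymmetric in $i,j$ and the data part appears as $d_{ij}+(k/2)\overline{d_{ij}'}$ rather than the real part required by the claim. The main obstacle is precisely this lack of symmetry. The resolution is to run the entire derivation with $i$ and $j$ interchanged, obtaining a second, equally valid expression for $g_{ji}(k)$; since $G$ is Hermitian, its conjugate is a second expression for $g_{ij}(k)$. Averaging the two, and again invoking Lemma~\ref{lma:hlmholtz3}, the symmetric boundary piece $\tfrac{\imath k}{2c(1)}b_j\overline{b_i}$ cancels against its swap, the two data contributions combine into $\Re\bigl(d_{ij}(k)+\tfrac{k}{2}d_{ij}'(k)\bigr)$, and the surviving boundary terms assemble into precisely the anti-Hermitian combination $\tfrac{\imath k^2}{2c(1)}\bigl(\overline{b_i'(k)}b_j(k) - \overline{b_i(k)}b_j'(k)\bigr)$, which is the claimed formula.
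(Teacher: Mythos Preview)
Your argument is correct and the computations check out: testing the weak form against $u_i$ gives the expression with the single bulk term $S_{ij}$, differentiating in $k$ supplies a second relation, your bridge identity $\overline{\mathcal{A}_{c,k}(u,v)}=\mathcal{A}_{c,k}(v,u)+2\imath k c(1)^{-1}v(1)\overline{u(1)}$ is exactly what is needed to turn $\mathcal{A}_{c,k}(\partial_k u_j,u_i)$ into $\overline{d_{ij}'}$ plus a boundary correction, and the final Hermitian averaging with $d_{ij}=d_{ji}$ lands on the stated formula.

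The paper takes a different route to the same place. Rather than differentiating in $k$, it introduces a second wavenumber $\ell$, tests the weak form for $u_i(\cdot;k)$ against $u_j(\cdot;\ell)$ and vice versa, and eliminates the bulk $\mathcal{M}$-term \emph{algebraically} from the pair of identities; only then does it pass to the limit $\ell\to k$ (in symmetrised form) to produce the $k$-derivatives. In effect, the paper replaces your implicit-differentiation step and your sesquilinear swap identity by a two-frequency resolvent-type difference quotient. Your approach is more direct and avoids the auxiliary parameter, at the price of needing the explicit non-Hermitian defect identity for $\mathcal{A}_{c,k}$; the paper's approach never needs that identity because the elimination of $\mathcal{M}$ happens before any limit, but it requires carrying the extra frequency through the algebra and then handling the $\ell\to k$ limit carefully for the boundary piece. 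Both routes require the same symmetrisation at the end to manufacture the real part and the anti-Hermitian boundary combination.
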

\begin{proof}
First note that we can express
\[
g_{ij}(k) = \langle u_j(\cdot; k),  u_i(\cdot; k)\rangle_{\mathsf{U}},
\]
because $\overline{u_i} = w_i$ (see Lemma \ref{lma:hlmholtz1}). Introducing the sesquilinear forms $\mathcal{M}(u,v) = \int_0^1 c(x)^{-2}u(x)\overline{v(x)}\mathrm{d}x$ and $\mathcal{B}(u,v) = c(1)^{-1}u(1)\overline{v(1)}$ and use $u_j(\cdot,\ell)$ as test function in the weak form, we find
\[
d_{ij}(\ell) = \langle u_i(\cdot;k),  u_j(\cdot;\ell)\rangle_{\mathsf{U}} - k^2 \mathcal{M}(u_i(\cdot;k),  u_j(\cdot;\ell)) - \imath k \mathcal{B}( u_i(\cdot;k),  u_j(\cdot;\ell) ).
\]
Similarly,
\[
\overline{d_{ji}(k)} = \langle u_i(\cdot;k),  u_j(\cdot;\ell)\rangle_{\mathsf{U}} - \ell^2 \mathcal{M}(u_i(\cdot;k),  u_j(\cdot;\ell)) + \imath \ell \mathcal{B}( u_i(\cdot;k),  u_j(\cdot;\ell) ).
\]
Combining the relations for $\ell^2 d_{ij}(\ell)$ and $k^2 \overline{d_{ji}(k)}$ and using that $d_{ij} = d_{ji}$ (see~Lemma \ref{lma:hlmholtz3}) we find
\[
\langle u_i(\cdot;k), u_j(\cdot;\ell)\rangle_\mathsf{U} = \frac{\ell^2 d_{ij}(\ell) - k^2 \overline{d_{ij}(k)}}{(\ell+k)(\ell-k)} + \frac{\imath k\ell}{\ell-k} \mathcal{B}(u_i(\cdot;k),u_j(\cdot;\ell)).
\]
On the other hand, we can express $g_{ij}(k)$ as
\[
g_{ij}(k) = \lim_{\ell\rightarrow k} {\textstyle\frac{1}{2}}\left(\langle u_j(\cdot;k), u_i(\cdot;\ell)\rangle_\mathsf{U} + \langle u_j(\cdot;\ell), u_i(\cdot;k)\rangle_\mathsf{U}\right).
\]
This yields
\[
g_{ij}(k) = \lim_{\ell\rightarrow k} \frac{\ell^2 \Re d_{ij}(\ell) - k^2 \Re{d_{ij}(k)}}{(\ell+k)(\ell-k)} + \frac{\imath k\ell}{2(\ell - k)}\left(\mathcal{B}(u_j\cdot;k),u_i(\cdot;\ell)) - \mathcal{B}(u_j(\cdot;\ell),u_i(\cdot;k))\right).
\]
We readily observe that the first term yields $\Re (d_{ij}(k) + \frac{k}{2}d'_{ij}(k))$. For the second term, observe that we can express it as
\[
\frac{\imath k\ell}{2(\ell - k)} \left(\mathcal{B}(u_j(\cdot;k) - u_j(\cdot;\ell),u_i(\cdot;\ell)) + \mathcal{B}(u_j(\cdot;\ell),u_i(\cdot;\ell)-u_i(\cdot;k))\right).
\]
This yields
\[
g_{ij}(k) = \Re \left(d_{ij}(k) + \frac{k}{2}d'_{ij}(k)\right) + \frac{\imath k^2}{2}\left(\mathcal{B}\left(u_j(\cdot;k),\frac{\mathrm{d} u_i}{\mathrm{d}k}(\cdot;k)\right)-\mathcal{B}\left(\frac{\mathrm{d} u_j}{\mathrm{d}k}(\cdot;k),u_i(\cdot;k)\right)\right).
\]
Introducing $b_i(k) = u_i(1;k)$ and using the definition of $\mathcal{B}$ yields the desired result.
\end{proof}

\end{document}